\documentclass[leqno, 10pt]{amsart}
\usepackage{enumerate,tikz-cd, amscd, mathtools, float, hyperref}
\usepackage{extpfeil}
\hypersetup{colorlinks,linkcolor={blue},citecolor={blue},urlcolor={cyan}}

    \oddsidemargin  0.0in
    \evensidemargin 0.0in
    \textwidth      6.5in
    \headheight     0.0in
    \topmargin      0.0in
    \textheight=9.0in

\setlength{\marginparwidth}{.85in}
\setlength{\topskip}{0.5in}

\newtheorem{theorem}{Theorem}[section]
\newtheorem{conjecture}[theorem]{Conjecture}
\newtheorem{proposition}[theorem]{Proposition}
\newtheorem{lemma}[theorem]{Lemma}
\newtheorem{definition}[theorem]{Definition}
\newtheorem{corollary}[theorem]{Corollary}

\newtheorem{example}[theorem]{Example}

\newtheorem{remark}[theorem]{Remark}

\DeclareMathOperator{\Hom}{Hom}

\DeclareMathOperator{\End}{End}

\DeclareMathOperator{\im}{Im}
\DeclareMathOperator{\rdim}{rdim}

\DeclareMathOperator{\Ind}{Ind}

\newcommand\floor[1]{\lfloor #1 \rfloor}
\newcommand{\newterm}{\textsf}


\title{Rouquier dimension is Krull dimension for normal toric varieties}

\author[Favero]{David Favero}
\address{
	\begin{tabular}{l}
		David Favero \\
		\hspace{.1in} University of Minnesota, School of Mathematics \\
		\hspace{.1in} 127 Vincent Hall, 206 Church Street, Mpls., MN 55455 \\
		\hspace{.1in} Korea Institute for Advanced Study \\
		\hspace{.1in} 85 Hoegiro, Dongdaemun-gu, Seoul, Republic of Korea 02455 \\
		\hspace{.1in} Email: {\bf favero@ualberta.ca} \\
	\end{tabular}
}
\author[Huang]{Jesse Huang}
\address{
	\begin{tabular}{l}
		Jesse Huang \\
		\hspace{.1in} University of Alberta, Department of Mathematical and Statistical Sciences \\
		\hspace{.1in} Central Academic Building 632, Edmonton, AB, Canada T6G 2C7 \\
		\hspace{.1in} Email: {\bf jesse.huang@ualberta.ca} \\
	\end{tabular}
}

\begin{document}

\maketitle

\begin{abstract}
    We prove that for any normal toric variety, the Rouquier dimension of its bounded derived category of coherent sheaves is equal to its Krull dimension. Our proof uses the coherent-constructible correspondence to translate the problem into the study of Rouquier dimension for certain categories of constructible sheaves.
\end{abstract}

\section{Introduction}

The Rouquier dimension of a triangulated category $\mathcal T$ is a measure of its homological complexity.  More precisely, given a generator $G \in \mathcal T$, the \newterm{generation time} is the minimal number of exact triangles needed to form every object of $\mathcal T$ from $G$.  The Rouquier dimension is simply the infimum of these generation times.

Rouquier, who introduced this invariant in \cite{Rou08}, primarily studied it for $\mathcal T = Coh(X)$, the derived category of coherent sheaves of a scheme $X$.  Notably, he showed that the Rouquier dimension of $Coh(X)$ is bounded below by the Krull dimension of $X$ and remarked that he knew of no cases where the two were not equal.  Orlov \cite{Orl09} went on to prove that the two invariants agree in dimension 1 and proposed that Rouquier dimension and Krull dimension should agree for all smooth algebraic varieties.
\begin{conjecture}[Orlov] \label{conj: Orlov}
For any smooth algebraic variety, the Rouquier dimension of its derived category of bounded coherent sheaves agrees with the Krull dimension of the variety.
\end{conjecture}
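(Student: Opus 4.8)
\emph{Proposed approach.} The statement above is a famous open conjecture, so what follows is a strategy together with an identification of the essential obstruction rather than a complete argument. By Rouquier's lower bound \cite{Rou08}, for any scheme $X$ of finite type over a field one has $\dim X \le \rdim \Coh(X)$; hence the whole content is the reverse inequality $\rdim \Coh(X) \le n := \dim X$ when $X$ is smooth, and I will describe how I would attack it for $X$ smooth and projective (the quasi-projective and non-projective cases require separate reductions I do not address here). The plan is to produce a \emph{resolution of the diagonal of controlled length}: fixing an ample line bundle, I would seek a bounded complex on $X \times X$ quasi-isomorphic to $\mathcal O_\Delta$ whose terms are finite direct sums of external tensor products $\pi_1^*\mathcal E_i \otimes \pi_2^*\mathcal F_i$ with each $\mathcal F_i$ locally free and whose amplitude is at most $n$. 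Granting such a complex, the Fourier--Mukai transform with kernel $\mathcal O_\Delta$ is the identity, and filtering it by the stupid filtration exhibits every $\mathcal G \in \Coh(X)$ as an iterated cone, in at most $n$ steps, of the objects $\pi_{1*}(\pi_1^*\mathcal E_i \otimes \pi_2^*(\mathcal F_i \otimes \mathcal G)) \cong \mathcal E_i \otimes R\Gamma(X, \mathcal F_i \otimes \mathcal G)$, each of which is built from $\bigoplus_i \mathcal E_i$ in a single step. Thus $\bigoplus_i \mathcal E_i$ would be a generator of generation time $\le n$, the desired bound.

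Such length-$n$ external resolutions are classical for $\mathbf P^n$ (Beilinson), and more generally for any smooth projective variety carrying a full strong exceptional collection; the present paper supplies a derived and constructible analogue for arbitrary normal toric varieties via the coherent--constructible correspondence. For a general smooth projective $X$ they are not known to exist, and producing them is precisely the hard step. The natural approach is dévissage. One version: pick a Lefschetz pencil, so that the blow-up $\widetilde X \to \mathbf P^1$ of $X$ along a smooth codimension-two center $Z$ (with $\dim Z = n-2$) is a family of $(n-1)$-dimensional hyperplane sections with at worst nodal fibers; prove a \emph{relative} resolution-of-the-diagonal estimate $\rdim \Coh(\widetilde X) \le \rdim \Coh(\mathbf P^1) + (n-1) = n$; then descend, using that $\Coh(X)$ sits inside $\Coh(\widetilde X)$ as an admissible subcategory (Orlov's blow-up decomposition) and that $\rdim$ of an admissible subcategory is bounded by $\rdim$ of the ambient category—applying the exact projection functor to a generator of generation time $t$ produces one of generation time $\le t$. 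Another version: filter $\Coh$ of a smooth compactification along a stratification and reassemble from the strata, which are smooth of lower dimension. Both versions reduce the conjecture to structural estimates for how generation time propagates through gluing.

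The main obstacle, common to every version, is that Rouquier dimension is not known to behave well under the categorical operations one needs. A semiorthogonal decomposition $\mathcal T = \langle \mathcal A, \mathcal B \rangle$ only gives $\rdim \mathcal T \le \rdim \mathcal A + \rdim \mathcal B + 1$ in general, whereas one wants, for ``geometric'' gluing data, the far stronger $\rdim \mathcal T \le \max(\rdim \mathcal A, \rdim \mathcal B) + c$ with $c$ universal (ideally $c = 0$); and one wants the relative bound $\rdim \Coh(\mathcal X) \le \rdim \Coh(B) + d$ for a fibration $\mathcal X \to B$ of relative dimension $d$ with at worst mild singular fibers, which is known for projective bundles via the relative Beilinson resolution but not in the generality required—the nodal fibers of a Lefschetz pencil already fall outside it. Establishing these structural estimates—equivalently, controlling the propagation of generation time through gluing—is where I expect the genuine difficulty to lie, and it is the reason the conjecture remains open despite being known for curves, for toric varieties, and for varieties with full exceptional collections. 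A complementary route, in the spirit of the present paper, would be to realize $\Coh(X)$ via a Fukaya-categorical or microlocal-sheaf model in which a length-$n$ diagonal resolution is manifest; but this again presupposes structural input—homological mirror symmetry in a usable form—that is not available for general $X$.
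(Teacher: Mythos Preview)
You correctly recognize that the statement is an open conjecture and that no complete proof exists; accordingly your submission is a strategy sketch with an honest identification of the obstructions, not a proof. That is the right posture. The paper does not prove Conjecture~\ref{conj: Orlov} in general either: it establishes only the toric case (Corollary~\ref{cor: rdim=dim variety}), and the conjecture is stated as motivation.

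It is still worth contrasting your general-purpose strategy with what the paper actually does in the toric setting, since the mechanisms are quite different. Your plan is algebro-geometric: build a length-$n$ external resolution of $\mathcal O_\Delta$ (Beilinson-style), or reduce dimension via Lefschetz pencils and control how generation time propagates through semiorthogonal gluing. The paper instead passes through the coherent--constructible correspondence $\kappa: \Coh(X_\Sigma)\simeq Sh^w_{\Lambda_\Sigma}(\mathbb T^n)$ and works entirely on the constructible side. There the ``probe generator'' $\mathcal P^\Lambda$ (the sum of stalk representatives) has generation time bounded above by $\dim \mathbb T^n$ via a Koszul-duality computation for the incidence algebra of a regular CW refinement (Proposition~\ref{prop: CW gen time}, Theorem~\ref{thm: main theorem}), and bounded below by $LL(H^*(\mathbb T^n,k))-1=n$ via the ghost lemma applied to cup products. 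The image of $\mathcal P^\Lambda$ under $\kappa^{-1}$ is the toric Frobenius pushforward $\bigoplus_{D\in\im\Phi}\mathcal O(D)$, so one obtains both the generator and its exact generation time without ever writing down a resolution of the diagonal. This sidesteps precisely the gluing estimates you flag as the main obstacle, at the cost of requiring a mirror model---which, as you note at the end, is unavailable for general $X$. So your diagnosis of where the difficulty lies is accurate, and the paper's contribution is an instance where that difficulty is circumvented rather than resolved.
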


While this conjecture is still largely open, is has been established in various special cases \cite{Rou08, Orl09, BF12, BFK12, BFK14, Yan16, BFK19, BDM19, Pir19, Ola21, EXZ21, BC21, BS22}.\footnote{We are not sure if this list is exhaustive.}  In this note, we settle the toric case:
\begin{theorem}[=Corollary \ref{cor: rdim=dim variety}] \label{thm: main theorem intro}
For any normal toric variety Conjecture \ref{conj: Orlov} holds.
\end{theorem}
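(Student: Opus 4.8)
The plan is to establish, for the toric variety $X_\Sigma$ attached to a fan $\Sigma$ in a lattice of rank $n$ (normality is exactly what guarantees $X=X_\Sigma$ for such a $\Sigma$), the two bounds $\rdim \Coh(X_\Sigma)\ge \dim X_\Sigma$ and $\rdim \Coh(X_\Sigma)\le \dim X_\Sigma$; only the second is new. The lower bound is Rouquier's inequality $\rdim \Coh(X)\ge \dim X$ for schemes of finite type over a field \cite{Rou08}, which applies verbatim. For the upper bound I may first split off the torus factors coming from a non-full-dimensional fan by a product argument (the factor $\mathbb G_m$ raising both $\rdim$ and $\dim$ by exactly $1$), reducing to the case that $\Sigma$ is full-dimensional, so $\dim X_\Sigma=n$; it then suffices to exhibit a generator of $\Coh(X_\Sigma)$ with generation time at most $n$.

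For this I would pass through (a suitable form of) the coherent--constructible correspondence. The first step is an equivalence between $\Coh(X_\Sigma)$ and a bounded derived category of constructible sheaves on a real $n$-torus $T$: the nonequivariant correspondence identifies $\operatorname{QCoh}(X_\Sigma)$ with sheaves on $T$ whose singular support (microsupport) lies in the Fang--Liu--Treumann--Zaslow Lagrangian skeleton $\Lambda_\Sigma\subset T^*T$, and one restricts to the subcategory matching $\Coh(X_\Sigma)$; in the singular (non-smooth, non-simplicial) case this is most transparent equivariantly on the universal cover $\mathbb R^n$ with its $\mathbb Z^n$-action. The second step upgrades this to a combinatorial model: one builds a finite regular cell stratification $\mathcal S_\Sigma$ of $T$, descended from a $\mathbb Z^n$-periodic polyhedral subdivision of $\mathbb R^n$ whose cells, being relatively open convex polyhedra, are contractible, and adapted to $\Sigma$ in the sense that $\Lambda_\Sigma$ is precisely the union of the conormals of its strata. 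Then the category of interest becomes the bounded derived category of finite-dimensional representations of the exit-path category $\Exit(\mathcal S_\Sigma)$, i.e.\ the face poset of an $n$-dimensional regular cell complex with contractible cells.

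The third step produces the dimension bound. Take as generator $G=\bigoplus_{\sigma\in\mathcal S_\Sigma}(j_\sigma)_!\,\mathbb k_\sigma$, the finite direct sum of standard sheaves on the cells (these lie in the category because $\mathcal S_\Sigma$ is adapted to $\Lambda_\Sigma$). Given $F$, filter it along the skeletal filtration $T_{(0)}\subset T_{(1)}\subset\cdots\subset T_{(n)}=T$ by dimension of strata, via the iterated recollement triangles $(j_k)_!(j_k)^*F\to F\to(i_k)_*(i_k)^*F$; the $k$-th associated graded piece is the pushforward of a bounded complex of local systems on the union of $k$-dimensional cells, and contractibility of the cells makes such a complex a finite direct sum of shifts of the $(j_\sigma)_!\,\mathbb k_\sigma$, hence an object of $\langle G\rangle_1$ (summands of finite sums of shifts of $G$). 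Since the filtration has $n+1$ layers, $F$ lies in $\langle G\rangle_{n+1}$, so the generation time of $G$ is at most $n$. With the lower bound this gives $\rdim \Coh(X_\Sigma)=n=\dim X_\Sigma$, and in particular Conjecture~\ref{conj: Orlov} in the toric case.

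The main obstacle is the second step: constructing the cellular model and matching it to $\Lambda_\Sigma$. One must (i) have the coherent--constructible correspondence in the required generality --- for $\Coh$ rather than only $\operatorname{Perf}$, and for arbitrary, possibly non-simplicial, fans --- which is why one works $\mathbb Z^n$-equivariantly on $\mathbb R^n$, where the strata are honest convex polyhedra and hence contractible, before descending to $T$; (ii) choose $\mathcal S_\Sigma$ fine enough that $\Lambda_\Sigma$ is literally its union of conormals, so that ``microsupport in $\Lambda_\Sigma$'' becomes ``$\mathcal S_\Sigma$-constructible'' and the generator objects genuinely belong to the category, yet structured enough that there remain only $n+1$ dimension layers so the estimate stays $n$; and (iii) check that restricting a $\mathcal S_\Sigma$-constructible complex to a fixed-dimension layer yields a cell-by-cell locally constant complex, the input to the recollement computation. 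The remaining ingredients --- the lower bound, the torus-factor reduction, and the homological algebra of the skeletal filtration --- are routine.
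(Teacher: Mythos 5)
There is a genuine gap at the step you yourself flag as obstacle (ii): it is not possible to choose a stratification $\mathcal S_\Sigma$ of the torus so that the FLTZ skeleton $\Lambda_\Sigma$ is literally the union of the conormals of its strata. The skeleton is a union of sets of the form $(\sigma^\perp+M)\times(-\sigma)$, so over a point of a positive-codimension stratum its fiber is a proper closed \emph{cone} inside the conormal space rather than the whole conormal: already for $X=\mathbb A^1$ the skeleton in $T^*S^1$ is the zero section together with only a half-line of the conormal at one point, and for $\mathbb P^2$ the fiber over a generic point of a codimension-one stratum is a ray, not a line. Consequently ``microsupport in $\Lambda_\Sigma$'' is strictly stronger than ``$\mathcal S_\Sigma$-constructible'', the standard objects $(j_\sigma)_!\mathbb{C}_{S_\sigma}$ generally do \emph{not} lie in $Sh^w_{\Lambda_\Sigma}$, and your skeletal-filtration argument takes place in a strictly larger category than the one equivalent to $Coh(X_\Sigma)$. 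This is exactly the difficulty the paper's machinery exists to handle: one bounds the generation time of a generator of the ambient stratification category $Sh^w_{\Lambda_{CW}}$ (where your filtration argument is valid) and then transports the bound to $Sh^w_{\Lambda_\Sigma}$ via the stop-removal localization $\rho^{L,w}$, which has dense image and therefore cannot increase generation time, and which carries the chosen generator (the stalk probes) to a generator of the smaller category. Your argument is repairable by inserting this localization step, but as written the upper bound is not established for the relevant category.

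Two smaller points. First, you apply the coherent--constructible correspondence directly to a possibly singular, non-simplicial $X_{\Sigma'}$; this needs the full strength of Kuwagaki's theorem for general toric stacks (matching $Coh$, not $\operatorname{Perf}$, with $Sh^w$), whereas the paper invokes only the simplicial smooth-stack case and reaches singular varieties by choosing a resolution and using that pushforward along a rational resolution has dense image (Kawamata); the paper's closing remark acknowledges your more direct route as an alternative, but it is an extra input you must cite, not a formality. Second, the torus-factor reduction is unnecessary, since $\dim X_\Sigma$ equals the rank of $N$ whether or not $\Sigma$ is full-dimensional and the mirror torus is always $M_\mathbb{R}/M$ of that rank. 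On the positive side, where your argument is sound --- the $(n+1)$-layer skeletal filtration bounding the generation time of the standard generator of the stratification category by $n$ --- it is a more elementary substitute for the paper's appeal to Koszulity of the incidence algebra and the Loewy length of its quadratic dual.
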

This result has also been obtained independently and will appear in forthcoming work of Hanlon-Hicks-Lazarev \cite{HHL-b}.

Our method of proof passes through a form of homological mirror symmetry for toric varieties (and certain toric stacks) called the coherent-constructible correspondence.  This correspondence goes back to the pioneering works of Fan-Liu-Treumann-Zaslow \cite{FLTZ11, FLTZ12, FLTZ14}, however, we use the most general form  due to Kuwagaki \cite{Kuw20}.  This provides an equivalence
\[
\kappa: Coh(X_\Sigma) \cong Sh^{w}(\Lambda_\Sigma)
 \]
between the derived categories of coherent sheaves on the toric variety $X_\Sigma$ and a certain class of constructible sheaves on a torus. 

Hence, to obtain the theorem, it is enough to study Rouquier dimension for certain categories of constructible sheaves.  It turns out that such categories have a natural generator $\mathcal P^\Lambda$ which we call the \newterm{probe generator}.  Therefore, we study the generation time of this object, providing upper and lower bounds which in the case of a torus are sharp (yielding our main result as a consequence).
\begin{theorem}[=Example~\ref{ex: torus}]
Let $\mathbb T = (S^1)^n$ be a real torus and $\Lambda$ be a Lagrangian skeleton.  Then the generation time of the probe generator is $n$.     
\end{theorem}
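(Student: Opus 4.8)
The plan is to push the question through the coherent-constructible correspondence $\kappa$ to the algebraic torus. For the trivial fan we have $X_\Sigma = (\mathbb{C}^*)^n$ and $\Lambda = \Lambda_\Sigma$ is the zero section of $T^*\mathbb{T}$, so $\kappa$ identifies $Sh^w(\Lambda)$ with $Coh((\mathbb{C}^*)^n) = \mathrm{Perf}(R)$, where $R = \mathbb{C}[x_1^{\pm},\dots,x_n^{\pm}]$, and it carries the probe generator $\mathcal{P}^\Lambda$ to the structure sheaf $\mathcal{O}_{(\mathbb{C}^*)^n} = R$ (up to a finite direct sum of shifts, which does not affect generation time). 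Since generation time and Rouquier dimension are invariants of the triangulated category, it therefore suffices to show that $R$ has generation time exactly $n$ in $\mathrm{Perf}(R)$.

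For the lower bound I would invoke Rouquier's theorem, already recalled in the introduction: since $\mathcal{P}^\Lambda$ is a generator, its generation time is at least the Rouquier dimension of $Coh((\mathbb{C}^*)^n)$, which is at least $\dim (\mathbb{C}^*)^n = n$. (This is also the specialization of the general lower bound.)

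The upper bound is the substantive part, and I would obtain it by a tensor factorization. First, the base case $n=1$: $R = \mathbb{C}[x^{\pm}]$ is a PID, so it has global dimension $1$ and $\mathrm{Perf}(R) = D^b(R)$; any object is quasi-isomorphic to a bounded complex of finite free modules $F^\bullet$, and because submodules of free modules over a PID are free, each exact sequence $0 \to Z^i \to F^i \to F^i/Z^i \to 0$ of cocycles splits, which lets one decompose $F^\bullet$ as a finite direct sum of two-term complexes $[B^i \to Z^i]$ of free modules placed in consecutive degrees. Each such two-term complex is a single cone of a map of free modules, hence lies in $\langle R\rangle_2$; as $\langle R\rangle_2$ is closed under finite direct sums and shifts, every object lies there, so the generation time of $R$ is at most $1$ (and it is exactly $1$, since a skyscraper is not a retract of a free module). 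Next, $\mathrm{Perf}((\mathbb{C}^*)^n)$ is the $n$-fold external tensor product $\mathrm{Perf}(\mathbb{C}^*)^{\otimes n}$, under which $\mathcal{O}_{(\mathbb{C}^*)^n}$ corresponds to $\mathcal{O}_{\mathbb{C}^*}^{\otimes n}$, so it remains to show subadditivity of generation time under external tensor: if $G_i$ generates $\mathcal{C}_i$ in $t_i$ cones, then $G_1 \otimes \cdots \otimes G_n$ generates $\mathcal{C}_1 \otimes \cdots \otimes \mathcal{C}_n$ in $t_1 + \cdots + t_n$ cones. For this, present each $X_i$ by a filtration with $t_i+1$ subquotients in $\langle G_i\rangle_1$ (finite sums of shifts of $G_i$), and take the product (total) filtration on $X_1 \otimes \cdots \otimes X_n$: since external tensor product is exact in each variable, the associated graded of the product filtration is the tensor product of the associated gradeds, so $X_1 \otimes \cdots \otimes X_n$ inherits a filtration with $t_1 + \cdots + t_n + 1$ subquotients, each a finite sum of shifts of $G_1 \otimes \cdots \otimes G_n$; a minor bookkeeping with retracts (add complements to make the filtrations genuine) finishes it. Taking each $t_i = 1$ gives generation time at most $n$ for $\mathcal{O}_{(\mathbb{C}^*)^n}$, hence for $\mathcal{P}^\Lambda$, and combined with the lower bound this yields exactly $n$.

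The main obstacle I anticipate is the subadditivity statement: the naive bound for the Rouquier dimension of a tensor product loses a $+1$ per factor and is too weak, so one genuinely has to work with the distinguished external-tensor generator and build the single ``product filtration'' by hand, together with checking the compatibilities $\mathrm{Perf}$ of a product $=$ tensor product of the $\mathrm{Perf}$'s and $\kappa(\mathcal{P}^\Lambda) = \mathcal{O}_{\mathbb{C}^*}^{\otimes n}$ (i.e.\ that the probe generator is set up compatibly with products of tori). A secondary point is that identifying the two-term complexes in the base case with objects of $\langle R\rangle_2$ on the nose uses that finitely generated projectives over $R$ are free, which one would also need if one preferred to argue entirely on the $\mathrm{Perf}(R)$ side for $n>1$ via a Koszul resolution of the diagonal rather than the tensor factorization.
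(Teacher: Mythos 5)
Your reduction through $\kappa$ only addresses the case where $\Lambda$ is the zero section of $T^*\mathbb{T}$ (the FLTZ skeleton of the trivial fan), for which $Sh^w_\Lambda(\mathbb{T})\simeq \mathrm{Perf}(\mathbb{C}[x_1^{\pm},\dots,x_n^{\pm}])$ and the probe generator is the free rank-one local system. But the statement --- and, crucially, its use in Corollary~\ref{cor: rdim=dim Cox} --- concerns an \emph{arbitrary} closed subskeleton $\Lambda\subseteq\Lambda_{CW}$, e.g.\ the FLTZ skeleton $\Lambda_\Sigma$ of a nontrivial fan, where $Sh^w_{\Lambda}(\mathbb{T})$ is the derived category of a compact toric variety or stack and is not the $\mathrm{Perf}$ of a Laurent ring. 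Since the zero section is a closed subskeleton of every $\Lambda$ under consideration, Lemma~\ref{lem: subskeleton time} converts your computation into the \emph{lower} bound $t(\mathcal{P}^\Lambda)\ge n$ (the paper instead gets this from cup-product ghost maps, using $LL(H^*(\mathbb{T},k))-1=n$), but the upper bound $t(\mathcal{P}^\Lambda)\le n$ for general $\Lambda$ --- which is the substantive content, obtained in the paper from stop removal (Lemma~\ref{lem: subskeleton time}) together with the Koszulity of the incidence algebra of a regular CW structure on $\mathbb{T}$ (Proposition~\ref{prop: CW gen time}) --- is not touched by your argument. So as written you prove only one instance of the Example, and not the instance needed downstream.

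Within the case you do treat there is also a gap in the subadditivity step: a general object of $\mathrm{Perf}(R_1\otimes\cdots\otimes R_n)\simeq\mathcal{C}_1\otimes\cdots\otimes\mathcal{C}_n$ is \emph{not} of the form $X_1\otimes\cdots\otimes X_n$, only an iterated cone/retract of such objects, so building a product filtration on external tensor products bounds the generation time of those particular objects but says nothing about the rest of the category (and passing through the cones destroys the count). The standard repair is the one you mention only in passing: resolve the diagonal. For $R=\mathbb{C}[x_1^{\pm},\dots,x_n^{\pm}]$ the diagonal ideal in $R\otimes R$ is generated by the regular sequence $x_i\otimes 1-1\otimes x_i$, so the Koszul complex exhibits the diagonal bimodule in $\langle R\otimes R\rangle_{n+1}$, and convolution against this resolution places every $Z\in\mathrm{Perf}(R)$ in $\langle R\rangle_{n+1}$; this is Rouquier's argument for smooth affine varieties (cf.\ also \cite{BF12, Ola21}). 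With that fix your computation $t_{\mathrm{Perf}(R)}(R)=n$ is correct, but it still yields only the zero-section instance of the statement.
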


For completeness, we also track the probe generator through the equivalence $\kappa^{-1}$.  As it turns out, the summands of the probe generator correspond to a natural stratification of the torus first considered by Bondal-Ruan \cite{Bon06} who established a bijection between strata and summands of the toric Frobenius pushforward of $\mathcal O$.  Therefore, unsurprisingly, we find that the probe generator is taken to the toric Frobenius pushforward of $\mathcal O$ (the same generator studied in \cite{BDM19}).

\section*{Acknowledgment}
We thank Andrew Hanlon for notifying us about \cite{HHL-b} and providing us with an early draft with an independent proof of Theorem \ref{thm: main theorem intro} found therein. We are also grateful to Evgeny Shinder and Martin Kalck for providing us with the reference \cite{Kaw19}.  We thank Matt Ballard and Alex Duncan for their excellent explanations of the results in \cite{BDM} and additional discussions at the Banff International Research Station. J. Huang is supported by a Pacific Institute for the Mathematical Sciences Postdoctoral Fellowship and by NSERC through the Discovery Grant program. D. Favero is supported by NSERC through the Discovery Grant and Canada Research Chair programs.

\section*{Notation and conventions}
\begin{itemize}
    \item All categories are dg derived. 
    \item We use the notation in e.g. \cite{Nad16} for various sheaf categories with certain stratification or singular support condition. $Sh^\diamond$ means the dg derived category of constructible sheaves of $\mathbb{C}$-modules (of possibly infinite rank), $Sh^w$ means compact objects in $Sh^\diamond$, and $Sh^c$ means the dg derived category of constructible sheaves of $\mathbb{C}$-modules of finite rank.
    \item The generation time of a generator $G$ in a triangulated category is denoted by $t(G)$. The Loewy length of an algebra $A$ is denoted by $LL(A)$.
    
\end{itemize}

\section{Generation time for probe generators}
Let $X$ be a finite regular CW complex satisfying the \newterm{axiom of the frontier}
$$
\overline{e_\alpha}\cap e_\beta \neq \emptyset \Leftrightarrow e_\beta \subset \overline{e_\alpha} \text{ for cells } e_\beta \text{ and } e_\alpha,
$$
equipped with its natural cell poset stratification $S_{CW}$ defined by the incidence relation above. The incidence algebra, $A$, of this poset is an HPA \cite[Definition 3.1]{FH22} associated to the quiver $Q$:
\begin{align*}
Q_0 & := \{e_i \in S_{CW} \} \\
\# Q_1 (e_j \rightarrow e_i) & = \begin{cases} 1 & \text{ if } \overline{e_i} \cap e_j \neq \emptyset \text{ and } j=i-1\\
0 & \text{ otherwise}
\end{cases}
\end{align*}
with ideal generated by path homotopy relation among exit paths. One can write $A = \End (G_A)$, with $G_A=\bigoplus_{S_\alpha\in S_{CW}} \mathcal P_\alpha$ and $\mathcal P_\alpha$ are the exit sheaves from strata $S_\alpha$ \cite[Proposition 4.16]{FH22}. 

\begin{lemma} \label{Koszul}
The algebra $A$ is Koszul. The direct sum of the left dual exceptional collection is
    $$G_A^!:=\bigoplus_{\alpha \in S_{CW}} i_{\alpha!} \mathbb{C}_{S_\alpha}[k_\alpha],$$
where $k_\alpha$ is the position $\mathcal P_\alpha$ appears in the exceptional collection. The endomorphism algebra of $G_A^!$ in $Sh^c_{S_{CW}}(X)$ is the Koszul dual $A^!$.
\end{lemma}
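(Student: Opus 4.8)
The plan is to establish the three assertions in order, with essentially all of the work going into Koszulity. For that I would put on $A$ the grading by path length; because every arrow of $Q$ joins cells whose dimensions differ by one, this is exactly the grading by dimension difference, $A_0$ is the semisimple span of the vertex idempotents, and $A$ is generated in degrees $0$ and $1$. To see $A$ is Koszul it suffices to produce a linear graded projective resolution of each simple $A$-module. Here I would invoke the standard description of self-extensions of simples over an incidence algebra (valid for the incidence algebra of any finite poset): for cells $e_\alpha < e_\beta$ one has $\operatorname{Ext}^n_A(S_\alpha,S_\beta)\cong\widetilde H^{\,n-2}\big(\lvert (e_\alpha,e_\beta)\rvert;\mathbb{C}\big)$, the reduced cohomology of the order complex of the open interval, while $\operatorname{Ext}^\bullet_A(S_\alpha,S_\alpha)=\mathbb{C}$ in degree $0$ and $\operatorname{Ext}^\bullet_A(S_\alpha,S_\beta)=0$ for incomparable cells. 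The one genuinely nontrivial ingredient is the classical fact that, since $X$ is a regular CW complex satisfying the axiom of the frontier, every open interval $(e_\alpha,e_\beta)$ of its face poset has the homotopy type of the sphere $S^{\,\dim e_\beta-\dim e_\alpha-2}$ (with $S^{-1}=\emptyset$); one proves this by induction on dimension from Bj\"orner's $\lvert P_{<e_\tau}\rvert\simeq\partial\overline{e_\tau}\simeq S^{\dim e_\tau-1}$, passing to links of vertices. Combining, $\operatorname{Ext}^n_A(S_\alpha,S_\beta)$ is one-dimensional exactly when $e_\alpha\le e_\beta$ and $n=\dim e_\beta-\dim e_\alpha$, and vanishes otherwise; hence the $n$-th term of the minimal projective resolution of $S_\alpha$ is $\bigoplus P_\beta$ over the cells $e_\beta\ge e_\alpha$ with $\dim e_\beta=\dim e_\alpha+n$, which is generated in degree $n$. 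So the resolution is linear and $A$ is Koszul.

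For the dual collection I would use that $\{\mathcal P_\alpha\}$ is a full exceptional collection with $\End(G_A)=A$, which gives an equivalence $Sh^c_{S_{CW}}(X)\simeq\operatorname{Perf}(A)$ sending $\mathcal P_\alpha$ to the indecomposable projective $P_\alpha$. Since the exit sheaf $\mathcal P_\alpha$ corepresents the stalk functor along $S_\alpha$, and the stalk of $i_{\beta!}\mathbb{C}_{S_\beta}$ is $\mathbb{C}$ on $S_\beta$ and $0$ elsewhere, we get $\Hom^\bullet(\mathcal P_\alpha,i_{\beta!}\mathbb{C}_{S_\beta})=\delta_{\alpha\beta}\,\mathbb{C}$ concentrated in degree $0$; thus $i_{\alpha!}\mathbb{C}_{S_\alpha}$ is carried to the simple module $S_\alpha$. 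Shifting each $i_{\alpha!}\mathbb{C}_{S_\alpha}$ into the homological degree $k_\alpha$ read off from the position of $\mathcal P_\alpha$ in the exceptional collection normalizes these objects so that $\big(i_{\alpha!}\mathbb{C}_{S_\alpha}[k_\alpha]\big)_\alpha$ is precisely the left dual exceptional collection of $(\mathcal P_\alpha)_\alpha$; equivalently $G_A^!$ corresponds to $\bigoplus_\alpha S_\alpha$ (up to shifts).

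For the last assertion I would transport endomorphisms through the equivalence: $\End_{Sh^c_{S_{CW}}(X)}(G_A^!)$ is identified with the Yoneda algebra $\operatorname{Ext}^\bullet_A\!\big(\bigoplus_\alpha S_\alpha,\bigoplus_\alpha S_\alpha\big)$ (a regrading thereof if the $k_\alpha$ are nonzero). Since $A$ is Koszul by the first step, this Yoneda algebra is formal and, with its internal grading, is the Koszul dual $A^!$ — up to the standard ambiguity of passing to the opposite algebra, which depends only on the orientation conventions fixed for exceptional collections and for the exit-path category. This gives $\End(G_A^!)\cong A^!$ and completes the proof.

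I expect essentially all of the difficulty to sit in the first paragraph, and within it in the single statement that the open intervals of the cell poset are spheres of the expected dimension; granting that, Koszulity and both duality statements are formal consequences of incidence-algebra homology together with Koszul-duality formalism. The remaining care is purely bookkeeping: pinning down the precise homological shifts $k_\alpha$ (equivalently, the normalization of $Sh^c_{S_{CW}}(X)\simeq\operatorname{Perf}(A)$ and the exact sense of ``position''), and tracking opposite algebras.
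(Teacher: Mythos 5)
Your proposal is correct and follows essentially the same route as the paper: both arguments rest on the classical fact that open intervals in the face poset of a regular CW complex are spheres of dimension $\dim e_\beta-\dim e_\alpha-2$, and on the computation $\Hom(\mathcal P_\beta, i_{\alpha!}\mathbb{C}_{S_\alpha})=\delta_{\alpha\beta}\,\mathbb{C}$ identifying the left duals. The only difference is one of packaging: where you prove Koszulity directly from the incidence-algebra $\operatorname{Ext}$ formula and linearity of the minimal resolutions of the simples, the paper simply observes that the sphere property makes the poset locally Cohen--Macaulay and cites Polo's criterion, whose proof is the computation you carried out.
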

\begin{proof}
The order complex of any open interval $(e_\alpha, e_\beta)$ in the cell poset, which is the face link of the minimal cell $e_\alpha$ inside the maximal cell $e_\beta$ bounding the interval, is always homeomorphic to a sphere of dim $\dim e_\beta-\dim e_\alpha -2$, hence the cell poset is locally Cohen-Macaulay. By \cite[Proposition, Section 1.6]{Pol95}, $A$ is Koszul. (See also \cite{Yan04, Yan05}.) 

Now observe that
    $$ \Hom (\mathcal P_\beta, i_{\alpha!} \mathbb{C}_{S_\alpha}) = \begin{cases} \mathbb{C} & \alpha=\beta \\ 0 & \text{else}\end{cases}$$
    Hence $i_{\alpha!} \mathbb{C}_{S_\alpha}[k]$ is the left dual of $\mathcal P_{\alpha}$.
\end{proof}

\begin{remark}
Explicitly, we can define an ideal $I \subseteq kQ$ generated by
\[
I := \langle a(e_i, e_{i-1})a(e_{i-1}, e_{i-2}) - a(e_i, e'_{i-1})a(e'_{i-1}, e_{i-2}) \rangle.
\]
Then $A = kQ/I$. Our assumptions on $X$ imply, by \cite[Theorem 4.2]{CF67}, that for any codimension 2 indicence relation $\sigma<_2 \sigma'$, there are exactly two intermediate $\tau$, such that $\sigma<_1 \tau<_1\sigma'$. Hence the perpendicular of $I$ in $kQ^{\text{op}}$ is given by
\[
I^\perp := \langle  a(e_i, e_{i-1})a(e_{i-1}, e_{i-2}) + a(e_i, e'_{i-1})a(e'_{i-1}, e_{i-2}) \rangle,
\]
which gives the quadratic dual algebra
\[
A^! := kQ^{\text{op}}/I^\perp.
\]
\end{remark}
\begin{remark}
In this simple case, $Sh^w_{S_{CW}}(X)= Sh^c_{S_{CW}}(X)$.
\end{remark}
\begin{remark}
The $A_\infty$-structure on both dg endomorphism algebras can be seen to be formal. Namely, an exit sheaf $\mathcal P_\alpha \in Sh_{S_{CW}}^c(X)$ is a complex of sheaves concentrated in degree 0. Since $\mathcal P_\alpha$ represents the stalk functor at $p\in S_\alpha$ \cite{FH22}, we have $\Hom_{Sh^c_{S_{CW}}(X)}(\mathcal P_\alpha, \mathcal P_{\alpha'})= \mathcal P_{\alpha'}|_p$, which is in degree 0. Hence all morphisms among the exit sheaves are in degree 0, which implies $\End_{Sh^c_{S_{CW}}(X)}(G_A)$ is formal. The Koszul property also forces the $A_\infty$-structure of $\End_{Sh^c_{S_{CW}}(X)}(G_A^!)$ to be formal for degree reasons.
\end{remark}

\begin{proposition} \label{prop: CW gen time}
Given $(X, S_{CW})$, we have
\[
t(G_A) = t(G_A^!) = \dim X
\]
\end{proposition}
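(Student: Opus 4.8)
The plan is to transport the computation to the module categories of the algebra $A$ and of its Koszul dual $A^!$, and then to recognize $\dim X$ as the global dimension of both algebras.

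First I would record two tilting descriptions. Since $\End_{Sh^c_{S_{CW}}(X)}(G_A)=A$ is formal and concentrated in degree $0$ and $G_A$ generates, $\Hom(G_A,-)$ is an equivalence $Sh^c_{S_{CW}}(X)\simeq D^b(A\Mod)$ sending $G_A$ to the free module $A$; likewise, by Lemma~\ref{Koszul} and formality of $\End(G_A^!)$, we get $Sh^c_{S_{CW}}(X)\simeq D^b(A^!\Mod)$ sending $G_A^!$ to $A^!$. Both algebras have finite global dimension (see below), so $D^b(B\Mod)=\op{perf}(B)$ in each case. Hence it suffices to prove: (i) for a finite-dimensional $\mathbb{C}$-algebra $B$ of finite global dimension, $t(B)=\op{gl.dim}(B)$; and (ii) $\op{gl.dim}(A)=\op{gl.dim}(A^!)=\dim X$.

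For the inequality $t(B)\le d:=\op{gl.dim}(B)$: since $\mathbb{C}$ is algebraically closed, $B/\op{rad}(B)$ is separable, so the projective dimension of $B$ over $B^e:=B\otimes_{\mathbb{C}}B^{\text{op}}$ is also $d$. Fixing a length-$d$ resolution $0\to P_d\to\cdots\to P_0\to B\to 0$ by finitely generated projective $B^e$-modules and tensoring over $B$, any $M\in D^b(B\Mod)$ is represented by the length-$(d+1)$ complex $[P_d\otimes_B M\to\cdots\to P_0\otimes_B M]$; each term is a summand of a finite sum of copies of $B\otimes_{\mathbb{C}}M$, which — since complexes of $\mathbb{C}$-vector spaces split — is a finite sum of shifts of $B$, so lies in $\langle B\rangle_{[0]}$. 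The stupid filtration then builds $M$ from these terms with $d$ triangles, so $t(B)\le d$. For the reverse inequality, choose a module $M$ with $\op{pd}_B(M)=d$ and a minimal projective resolution $0\to P^d\to\cdots\to P^0\to M\to 0$. The syzygy triangles give connecting maps $\delta_k\colon\Omega^{k-1}M\to\Omega^kM[1]$ (with $\Omega^0M=M$), each of which kills all cohomology and is therefore $B$-ghost (because $\Hom_{D^b(B)}(B,-)$ is $H^\ast(-)$). Their composite $M\to\Omega^dM[d]=P^d[d]$, post-composed with a surjection $P^d\twoheadrightarrow S$ onto a simple summand of $\op{top}(P^d)$, represents the class in $\op{Ext}^d_B(M,S)=\Hom_B(P^d,S)$ corresponding to that surjection, which is nonzero by minimality. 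Thus $d$ $B$-ghost maps out of $M$ have nonzero composite, so by the ghost lemma $M$ cannot be built from $B$ with fewer than $d$ triangles, i.e. $t(B)\ge d$.

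Finally, $\op{gl.dim}(A)=\dim X$: for the incidence algebra of the cell poset, $\op{Ext}^i_A(S_\alpha,S_\beta)\cong\widetilde H^{i-2}$ of the order complex of the open interval $(e_\alpha,e_\beta)$, which by the sphericity established in the proof of Lemma~\ref{Koszul} is nonzero exactly for $i=\dim e_\beta-\dim e_\alpha$; hence $\op{pd}_A(S_\alpha)=\max\{\dim e_\beta: e_\beta\ge e_\alpha\}-\dim e_\alpha$, maximized with value $\dim X$ at a vertex of a top-dimensional cell. For $A^!$: $A$ is Koszul with finite-dimensional Koszul dual, so $\op{gl.dim}(A^!)$ equals the top degree of the graded algebra $(A^!)^!=A$, that is, the length of the longest path in $Q$; since $Q$ has arrows only between cells of consecutive dimension and $A$ is an honest incidence algebra (so the class of a maximal flag $e_{\alpha_0}<\cdots<e_{\alpha_n}$ in a top-dimensional cell is nonzero in $A$), this length is exactly $\dim X$. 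I expect the main friction to lie in the first paragraph: pinning down that $G_A^!$ really corresponds to the \emph{regular} $A^!$-module concentrated in degree $0$ — this is exactly where the shifts $k_\alpha$ and Koszulity are used — and that $A^!$ has finite global dimension; together with the geometric input that links of cells are spheres, which simultaneously powers the Koszulity of $A$ and the two global-dimension computations. The algebraic core $t(B)=\op{gl.dim}(B)$ over $\mathbb{C}$ is then formal, given the ghost lemma and the Hochschild (bimodule) resolution.
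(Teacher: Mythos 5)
Your proposal is correct, but it takes a genuinely different route from the paper, whose proof is essentially a citation: by Lemma~\ref{Koszul} the algebra $A$ is Koszul with $\End(G_A^!)=A^!$, and \cite[Proposition 3.26]{BFK12} then gives $t(G_A)=LL(A^!)-1$ and, symmetrically, $t(G_A^!)=LL(A)-1$, both of which are read off as $\dim X$ from the length of the longest flag of cells. You instead prove the underlying general fact from scratch --- $t(B)=\op{gl.dim}(B)$ for a finite-dimensional $\mathbb{C}$-algebra of finite global dimension, via the Hochschild bimodule resolution for the upper bound and minimal-resolution syzygy ghosts for the lower bound --- and then compute $\op{gl.dim}(A)=\op{gl.dim}(A^!)=\dim X$ from the $\op{Ext}$-of-simples formula for incidence algebras, the sphericity of interval links, and Koszul duality. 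The numerical content is identical, since $LL(B^!)-1=\op{gl.dim}(B)$ for a Koszul algebra $B$; what your version buys is self-containedness, independence of the core equality $t(B)=\op{gl.dim}(B)$ from the Koszul hypothesis (Koszulity enters only to identify $\End(G_A^!)$ with $A^!$ and to compute $\op{gl.dim}(A^!)$), and an actual justification of the equalities $LL(A^!)-1=\dim X=LL(A)-1$, which the paper asserts without proof. What it costs is length, plus the tilting identifications $Sh^c_{S_{CW}}(X)\simeq D^b(A\Mod)\simeq D^b(A^!\Mod)$, where --- as you correctly flag --- one must check that the shifts $k_\alpha$ place $\End(G_A^!)$ in degree~$0$. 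One point worth emphasizing: your computation of $\op{gl.dim}(A)$ genuinely requires the sphericity of the open intervals and not just the poset structure (for a general graded poset the Loewy length of the incidence algebra can strictly exceed its global dimension, e.g.\ for a three-element chain), so the geometric input of Lemma~\ref{Koszul} is doing the same work in both arguments.
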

\begin{proof}
By Lemma~\ref{Koszul}, $A$ is Koszul, and the quadratic dual $\End(G^!_A) = A^!$.  Hence
by \cite[Proposition 3.26]{BFK12} we have $t(G_A) = LL(A^!) - 1 = \dim X$.  Symmetrically, $\End(G_A) = A$ and $t(G^!_A) = LL(A) - 1 = \dim X$.
\end{proof}


We now give a (somewhat restrictive) definition of Lagrangian skeleta for convenience.

\begin{definition} \label{def: skeleton}
    A \newterm{Lagrangian skeleton} on a smooth manifold $X$ is a conical Lagrangian in $T^*X$ containing the zero section, embedded as a closed subset into the union of conormals of all strata in some appropriate Whitney stratification of $X$.
    
    If $\mathcal S$ is a coarsening of such stratification on $X$ such that the induced map on strata is a poset map (where the partial ordering is given by $S_i\cap \overline S_j \neq \emptyset$), we call the Lagrangian skeleton
    $$\Lambda_S:=\bigcup_{S_\alpha \in \mathcal S} ss(i_{\alpha!} \mathbb{C}_{S_\alpha}).$$
    the \newterm{stratification skeleton} of $\mathcal S$.
\end{definition}

Now suppose $X$ is a smooth manifold and $\Lambda'\subset T^*X$ is a Lagrangian skeleton. Recall from \cite{Nad16} that if $\Lambda\subset \Lambda'$ is a closed subskeleton, there is a localization functor
$$
\rho^L: Sh^\diamond_{\Lambda'}(X)\rightarrow Sh^\diamond_{\Lambda}(X),
$$
defined as the left adjoint to the inclusion $\rho: Sh^\diamond_{\Lambda}(X) \rightarrow Sh^\diamond_{\Lambda'}(X)$. Moreover, $\rho^L$ preserves compact objects. Restriction to compact objects gives the stop removal functor 
$$
\rho^{L,w}: Sh^w_{\Lambda'}(X)\rightarrow Sh^w_{\Lambda}(X).
$$




\begin{definition}
Let $\Lambda \subseteq T^*X$ be a Lagrangian skeleton.  We call the representative of the stalk functor at $v$, that is, $\mathcal P^\Lambda_v \in Sh^w_\Lambda(X)$ satisfying
\[
\Hom_{Sh^\diamond_\Lambda(X)}(\mathcal P_v, F) = F_v,\  \forall F\in Sh^\diamond_\Lambda(X)
\]
the \newterm{stalk probe} at $v$.
\end{definition}

\begin{remark}
 Here, we do not require $v$ to be inside the smooth locus of $\Lambda$. This is allowed since taking the stalk at any point $v\in X$ is a cocontinuous functor $Sh^\diamond_\Lambda\rightarrow \mathbb{C}\text{-Mod}$.
\end{remark}

\begin{lemma} \label{lem: probe to probe}
Let $\Lambda \subseteq \Lambda'$ be a closed subskeleton, then
\[
\rho^{L,w}(\mathcal P_v^{\Lambda'}) = \mathcal P_v^\Lambda.
\]
\end{lemma}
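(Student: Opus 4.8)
The plan is to verify that $\rho^{L,w}(\mathcal P_v^{\Lambda'})$ satisfies the universal property that characterizes $\mathcal P_v^{\Lambda}$, and then to conclude by uniqueness of corepresenting objects. First I would observe that $\mathcal P_v^{\Lambda'}$ is by definition an object of $Sh^w_{\Lambda'}(X)$, i.e.\ a compact object, and that $\rho^L$ preserves compact objects (as recalled above from \cite{Nad16}); consequently $\rho^L(\mathcal P_v^{\Lambda'})$ already lies in $Sh^w_{\Lambda}(X)$ and coincides with $\rho^{L,w}(\mathcal P_v^{\Lambda'})$. Thus it is enough to identify $\rho^L(\mathcal P_v^{\Lambda'})$ with $\mathcal P_v^{\Lambda}$ inside $Sh^\diamond_\Lambda(X)$.

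Next, for an arbitrary $F\in Sh^\diamond_\Lambda(X)$, I would use the adjunction $(\rho^L,\rho)$ together with the defining property of $\mathcal P_v^{\Lambda'}$ to compute
\[
\Hom_{Sh^\diamond_\Lambda(X)}\bigl(\rho^L(\mathcal P_v^{\Lambda'}), F\bigr)\;\cong\;\Hom_{Sh^\diamond_{\Lambda'}(X)}\bigl(\mathcal P_v^{\Lambda'}, \rho F\bigr)\;\cong\;(\rho F)_v .
\]
Since $\rho$ is the fully faithful inclusion of the full subcategory cut out by the singular-support condition $ss(-)\subseteq\Lambda$ — it does not alter the underlying complex of sheaves — we have $(\rho F)_v = F_v$, naturally in $F$. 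Hence $\rho^L(\mathcal P_v^{\Lambda'})$ corepresents the stalk functor at $v$ on $Sh^\diamond_\Lambda(X)$. Because $\mathcal P_v^{\Lambda}$ is by definition the corepresenting object of that same functor, the Yoneda lemma supplies a canonical isomorphism $\rho^L(\mathcal P_v^{\Lambda'})\cong\mathcal P_v^{\Lambda}$, which is the claim.

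I do not expect a genuine obstacle here: the argument is formal given (i) that $\rho^L$ preserves compactness, which is quoted from \cite{Nad16}, and (ii) that $\rho$ leaves stalks unchanged. The only point deserving an explicit line is (ii); it holds because $\rho$ is an embedding of constructible-sheaf categories that differ only by a microlocal condition, equivalently because the stalk functor at $v$ factors through $\rho$ unchanged. Everything else is the standard manipulation of an adjunction and Yoneda.
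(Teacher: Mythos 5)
Your argument is correct and is essentially the paper's own proof: both compute $\Hom_{\Lambda}(\rho^{L,w}(\mathcal P_v^{\Lambda'}),\mathcal F)\cong\Hom_{\Lambda'}(\mathcal P_v^{\Lambda'},\mathcal F)\cong\mathcal F_v$ via the adjunction and the defining property of the stalk probe, then conclude by corepresentability. Your added remarks on compactness preservation and on $\rho$ leaving stalks unchanged are just explicit versions of steps the paper leaves implicit.
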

\begin{proof}
Let $\mathcal F\in Sh^\diamond_{\Lambda}(X)$. We have
\begin{align*}
\Hom_{\Lambda}(\rho^{L,w}(\mathcal P_v^{\Lambda'}), \mathcal F) & = \Hom_{\Lambda'}(\mathcal P_v^{\Lambda'}, \mathcal F) & \text{ by adjunction}\\
& = \mathcal F_v & \text{ by definition} 
\end{align*}
\end{proof}

Let $\mathbb{L} \subset T^*X$ be a Lagrangian skeleton. The collection of stalk probes at all $x\in X$ induce the following stratification on $X$:
$$p \text{ and } q \text{ are on the same stratum} \Leftrightarrow \mathcal P_p^\mathbb L=\mathcal P_q^\mathbb L.$$ 
\begin{definition}
    The above stratification is said to be the \newterm{$\mathbb L$-probe stratification}, denoted by $S_\mathbb L$.
\end{definition}

\begin{lemma}\label{lem: probe stratification}
For any closed subskeleton $\mathbb L$ of a block stratification skeleton $\Lambda_S$, the $\mathbb L$-probe stratification $S_\mathbb{L}$ is a coarsening of $S$.
\end{lemma}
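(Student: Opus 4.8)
The plan is to reduce to the universal case $\mathbb{L}=\Lambda_S$ using Lemma~\ref{lem: probe to probe}, and then to invoke the fact that a sheaf whose singular support lies in $\Lambda_S$ is weakly $S$-constructible, hence constant with a canonical trivialization along each (contractible) stratum of the block stratification $S$.

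First I would set up the reduction. Since $\mathbb{L}\subseteq\Lambda_S$ is a closed subskeleton, Lemma~\ref{lem: probe to probe} gives $\mathcal P_v^{\mathbb{L}}=\rho^{L,w}(\mathcal P_v^{\Lambda_S})$ for every $v\in X$. Hence if $p$ and $q$ lie in a common stratum of $S_{\Lambda_S}$, that is $\mathcal P_p^{\Lambda_S}\cong\mathcal P_q^{\Lambda_S}$, then applying the functor $\rho^{L,w}$ yields $\mathcal P_p^{\mathbb{L}}\cong\mathcal P_q^{\mathbb{L}}$, so $p$ and $q$ lie in a common stratum of $S_{\mathbb{L}}$. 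In other words $S_{\mathbb{L}}$ coarsens $S_{\Lambda_S}$, and since coarsening is transitive it is enough to prove the lemma in the case $\mathbb{L}=\Lambda_S$: namely that $\mathcal P_p^{\Lambda_S}\cong\mathcal P_q^{\Lambda_S}$ whenever $p,q$ lie in a common stratum $S_\alpha$ of $S$.

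For this last step I would argue as follows. By construction $\Lambda_S=\bigcup_\beta ss(i_{\beta!}\mathbb{C}_{S_\beta})$ is contained in the union $\bigcup_\beta\overline{T^*_{S_\beta}X}$ of conormals of the strata of $S$, so any $F\in Sh^\diamond_{\Lambda_S}(X)$ has $ss(F)$ in that union; by the standard microlocal criterion this forces $F$ to be weakly $S$-constructible, so $i_\alpha^*F$ is a locally constant sheaf on $S_\alpha$. Since the strata of a block stratification are contractible, parallel transport along paths in $S_\alpha$ is path-independent and supplies a canonical isomorphism $F_p=(i_\alpha^*F)_p\xrightarrow{\,\sim\,}(i_\alpha^*F)_q=F_q$, natural in $F$ because morphisms in $Sh^\diamond_{\Lambda_S}(X)$ restrict to morphisms of local systems on $S_\alpha$. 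Thus the stalk functors at $p$ and at $q$ on $Sh^\diamond_{\Lambda_S}(X)$ are naturally isomorphic, so their representing objects coincide: $\mathcal P_p^{\Lambda_S}\cong\mathcal P_q^{\Lambda_S}$. Combined with the reduction above, this proves that $S_{\mathbb{L}}$ coarsens $S$.

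I expect the only real obstacle to be making precise the assertion that the singular-support bound yields constancy along each stratum together with a path-independent trivialization of stalks: this is exactly the point where the block structure of $S$ (connectedness and contractibility of strata) and the Kashiwara--Schapira dictionary between singular support and constructibility enter. Everything else is formal bookkeeping with adjunctions and transitivity of coarsening.
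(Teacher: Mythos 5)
Your proposal is correct and follows essentially the same route as the paper: the reduction to the universal case $\mathbb L = \Lambda_S$ via Lemma~\ref{lem: probe to probe} and functoriality of $\rho^{L,w}$ is exactly the paper's argument. The only difference is in the base case, where the paper simply cites \cite{FH22} (the stalk probes on $\Lambda_S$ are the exit sheaves, which depend only on the stratum of a block stratification), while you reprove that fact directly from the singular-support characterization of weak $S$-constructibility and the contractibility of the strata; this is a legitimate unpacking of the citation rather than a different proof.
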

\begin{proof}
The stalk probes are exit sheaves whenever $\Lambda=\bigcup_{S_\alpha\in S} ss(i_{\alpha!}\mathbb{C}_{S_\alpha})$ for a block stratification $S$ in the sense of \cite{FH22}.
\begin{align*}
p\sim_S q & \Leftrightarrow \mathcal P^{\Lambda_S}_p=\mathcal P^{\Lambda_S}_q & \text{since exit sheaves only depend on the stratum} \\
& \Rightarrow \rho ^{L,w} \mathcal  P^{\Lambda_S}_p =  \rho^{L,w} \mathcal P^{\Lambda_S}_q  & \text{ trivially}\\
& \Rightarrow \mathcal P^{\mathbb L}_p=\mathcal P^{\mathbb L}_q & \text{by Lemma \ref{lem: probe to probe}}\\
& \Leftrightarrow p\sim_{S_\mathbb L} q & \text{by definition}
\end{align*}
\end{proof}

\begin{definition}\label{def:probe generator}
Suppose $S_{CW}$ is a regular CW complex stratification on a smooth manifold $X$ with stratification skeleton $\Lambda_{CW}$ (see Definition \ref{def: skeleton}), and let $\Lambda \subseteq \Lambda_{CW}$ be a closed subskeleton. For each cell $e_\alpha$, choose a base point $e_{\alpha, v}$. The \newterm{probe generator} is the direct sum of stalk probes
\[
\mathcal P^\Lambda := \bigoplus_{S_\alpha\in S_{CW}} \mathcal P^\Lambda_{e_{\alpha,v}}
\]
considered as an object of $Sh^w_{\Lambda}$.
\end{definition}

\begin{lemma} \label{lem: subskeleton time}
Let $\Lambda \subseteq \Lambda'$ be a closed subskeleton of a Lagrangian skeleton $\Lambda\subset T^*X$, then 
\[
t(\mathcal P^{\Lambda}) \leq t(\mathcal P^{\Lambda'})
\]
\end{lemma}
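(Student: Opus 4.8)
The plan is to exploit the stop removal functor $\rho^{L,w}\colon Sh^w_{\Lambda'}(X)\to Sh^w_{\Lambda}(X)$. By Lemma~\ref{lem: probe to probe} it carries each stalk probe $\mathcal P^{\Lambda'}_{v}$ to $\mathcal P^{\Lambda}_{v}$, and being additive it carries the direct sum over the chosen base points $e_{\alpha,v}$ to the corresponding direct sum, so $\rho^{L,w}(\mathcal P^{\Lambda'})=\mathcal P^{\Lambda}$. It then suffices to prove the general principle: if $F\colon\mathcal T\to\mathcal T'$ is an exact functor such that every object of $\mathcal T'$ is a direct summand of an object in the essential image of $F$, then $t(F(G))\le t(G)$ for every $G\in\mathcal T$. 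The rest of the argument splits into verifying this principle and checking that $\rho^{L,w}$ satisfies its hypothesis.

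For the principle, recall that the full subcategory $\langle G\rangle_{k}$ of objects obtained from $G$ using shifts, finite direct sums, direct summands, and at most $k-1$ iterated cones is closed under direct summands, and that an exact functor sends $\langle G\rangle_{k}$ into $\langle F(G)\rangle_{k}$ (cones go to cones, sums to sums, summands to summands, shifts to shifts). If $t(G)=n$, so $\langle G\rangle_{n+1}=\mathcal T$, then $F(\mathcal T)\subseteq\langle F(G)\rangle_{n+1}$; since $\langle F(G)\rangle_{n+1}$ is closed under summands and every object of $\mathcal T'$ is a summand of one in $F(\mathcal T)$, we get $\langle F(G)\rangle_{n+1}=\mathcal T'$, whence $t(F(G))\le n$. (If $t(G)=\infty$ there is nothing to prove.) Applied to $F=\rho^{L,w}$ and $G=\mathcal P^{\Lambda'}$, this gives $t(\mathcal P^{\Lambda})\le t(\mathcal P^{\Lambda'})$.

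It remains to check that $\rho^{L,w}$ satisfies the hypothesis, i.e.\ that every object of $Sh^w_{\Lambda}(X)$ is a direct summand of something in its essential image. The inclusion $\rho\colon Sh^\diamond_{\Lambda}(X)\hookrightarrow Sh^\diamond_{\Lambda'}(X)$ is fully faithful with left adjoint $\rho^{L}$, so the counit $\rho^{L}\rho\to\mathrm{id}$ is an isomorphism and $\rho^{L}$ exhibits $Sh^\diamond_{\Lambda}(X)$ as the Verdier quotient of $Sh^\diamond_{\Lambda'}(X)$ by $\ker\rho^{L}$. Both categories are compactly generated with compact objects $Sh^w$, and $\ker\rho^{L}$ is generated by compact objects of $Sh^\diamond_{\Lambda'}(X)$, so the Neeman--Thomason localization theorem identifies the induced functor on compact objects, which is exactly $\rho^{L,w}$, as a functor with dense essential image whose idempotent completion is all of $Sh^w_{\Lambda}(X)$; since $Sh^w_{\Lambda}(X)$ is itself idempotent complete (being the compact objects of a compactly generated triangulated category), every one of its objects is a direct summand of some $\rho^{L,w}(G')$, as needed. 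The one point requiring care is this last input --- that stop removal really is a Verdier localization with compactly generated kernel --- which is where the structure theory of $Sh^\diamond$ as developed in \cite{Nad16} enters; everything else is formal bookkeeping with generation time.
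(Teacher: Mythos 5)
Your proof is correct and follows essentially the same route as the paper: identify $\rho^{L,w}(\mathcal P^{\Lambda'})=\mathcal P^{\Lambda}$ via Lemma~\ref{lem: probe to probe} and use the fact that an exact functor with dense image does not increase generation time. The only difference is that the paper simply cites \cite[Corollary 4.22]{GPS} for the density of the image of $\rho^{L,w}$, whereas you sketch the underlying Neeman--Thomason localization argument; the compact generation of $\ker\rho^{L}$ that you flag as the delicate point is precisely the content of that citation.
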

\begin{proof}
The functor $\rho^{L,w}$ has dense image by \cite[Corollary 4.22]{GPS}.  Hence
\begin{align*}
t(\mathcal P^{\Lambda'}) & \geq t(\rho^{L,w}(\mathcal P^{\Lambda'})) & \text{since $\rho^{L,w}$ has dense image} \\
& = t(\mathcal P^\Lambda) & \text{ by Lemma~\ref{lem: probe to probe}.}
\end{align*}
\end{proof}


\begin{theorem} \label{thm: main theorem}
Let $X$, $\Lambda_{CW}$, and $\Lambda\subset \Lambda_{CW}$ be as in Definition~\ref{def:probe generator}. Then,
\[
LL(H^*(X,k)) - 1 \leq t(\mathcal P^\Lambda) \leq \dim X
\]
In particular, the Rouquier dimension of $Sh^w_\Lambda(X)$ is at most $\dim X$.
\end{theorem}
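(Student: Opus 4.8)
The plan is to establish the two displayed inequalities separately; the final sentence then follows at once. For the upper bound $t(\mathcal P^\Lambda)\leq \dim X$ I would reduce to the full CW skeleton. Since $\Lambda\subseteq \Lambda_{CW}$ is a closed subskeleton, Lemma~\ref{lem: subskeleton time} gives $t(\mathcal P^\Lambda)\leq t(\mathcal P^{\Lambda_{CW}})$, so it is enough to treat $\Lambda_{CW}$. For the stratification skeleton $\Lambda_{CW}=\bigcup_\alpha ss(i_{\alpha!}\mathbb{C}_{S_\alpha})$ the stalk probe at a base point of the cell $e_\alpha$ is exactly the exit sheaf $\mathcal P_\alpha$, as already observed in the proof of Lemma~\ref{lem: probe stratification}; hence $\mathcal P^{\Lambda_{CW}}=\bigoplus_\alpha \mathcal P_\alpha = G_A$ as an object of $Sh^w_{\Lambda_{CW}}(X)=Sh^c_{S_{CW}}(X)$. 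Proposition~\ref{prop: CW gen time} then gives $t(\mathcal P^{\Lambda_{CW}})=t(G_A)=\dim X$, which finishes this half.

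For the lower bound $LL(H^*(X,k))-1\leq t(\mathcal P^\Lambda)$ I would run a ghost-map argument against the constant sheaf. First, $k_X$ belongs to $Sh^w_\Lambda(X)$: its singular support is the zero section, which lies in $\Lambda$ by Definition~\ref{def: skeleton}, and it is compact because $X$ is a finite CW complex, so that $\Hom_{Sh^\diamond_\Lambda(X)}(k_X,-)=R\Gamma(X,-)$ commutes with arbitrary coproducts. Put $\ell:=LL(H^*(X,k))$; by definition of the Loewy length there are homogeneous classes $\alpha_1,\dots,\alpha_{\ell-1}$ of strictly positive degrees $d_i=|\alpha_i|$ with $\alpha_1\cup\cdots\cup\alpha_{\ell-1}\neq 0$ in $H^*(X,k)$. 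Regarding each $\alpha_i$ as a morphism $k_X\to k_X[d_i]$, and using that composition in the graded ring $\Hom^*_{Sh^\diamond_\Lambda(X)}(k_X,k_X)=H^*(X,k)$ is the cup product, the $(\ell-1)$-fold composite
\[
k_X\xrightarrow{\ \alpha_1\ }k_X[d_1]\xrightarrow{\ \alpha_2\ }\cdots\xrightarrow{\ \alpha_{\ell-1}\ }k_X[d_1+\cdots+d_{\ell-1}]
\]
equals, up to sign, $\alpha_1\cup\cdots\cup\alpha_{\ell-1}\neq 0$. Each of these arrows is a ghost for $\mathcal P^\Lambda$: for every base point $v$ the complex $\Hom^*_{Sh^\diamond_\Lambda(X)}(\mathcal P^\Lambda_v,k_X)=(k_X)_v=k$ is concentrated in degree $0$, so the map induced by a positive-degree class $\alpha_i$ under $\Hom^*(\mathcal P^\Lambda_v,-)$ vanishes for degree reasons (source in degree $0$, target in degree $-d_i\neq 0$); as this holds for each summand of $\mathcal P^\Lambda$, the map $\alpha_i$ is $\mathcal P^\Lambda$-ghost. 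The ghost lemma now shows that $k_X$ cannot be built from $\mathcal P^\Lambda$ using fewer than $\ell-1$ cones, whence $t(\mathcal P^\Lambda)\geq \ell-1$.

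The steps I expect to require care are: (i) the identification $\mathcal P^{\Lambda_{CW}}=G_A$, which rests on stalk probes of a stratification skeleton being exit sheaves; (ii) checking that $k_X$ is a compact object of $Sh^w_\Lambda(X)$; and (iii) the precise bookkeeping in the ghost lemma, namely that an $(\ell-1)$-fold nonzero composite of $\mathcal P^\Lambda$-ghosts out of $k_X$ forces $t(\mathcal P^\Lambda)\geq\ell-1$. I expect the lower bound to be the only substantive part: the upper bound is immediate from Lemma~\ref{lem: subskeleton time} and Proposition~\ref{prop: CW gen time}, and the closing ``in particular'' follows because $\mathcal P^\Lambda=\rho^{L,w}(\mathcal P^{\Lambda_{CW}})$ is a generator of $Sh^w_\Lambda(X)$, so its Rouquier dimension is at most $t(\mathcal P^\Lambda)\leq\dim X$.
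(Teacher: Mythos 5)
Your proposal is correct and follows essentially the same route as the paper: the upper bound via Lemma~\ref{lem: subskeleton time} and Proposition~\ref{prop: CW gen time} (you make explicit the identification $\mathcal P^{\Lambda_{CW}}=G_A$, which the paper leaves implicit), and the lower bound via a nonzero $(\ell-1)$-fold cup product realized as a composite of $\mathcal P^\Lambda$-ghost maps $k_X\to k_X[d_i]$ together with the ghost lemma. No gaps; your extra care about $k_X$ lying in $Sh^w_\Lambda(X)$ and about the degree bookkeeping only tightens the argument.
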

\begin{proof}
Let $c \in H^i(X,\mathbb C)$ be a cohomology class.  This induces a map of constructible sheaves
\[
c: \mathbb C_X \to \mathbb C_X[i].
\]
For $i>0$, the map $c$ is $\mathcal P^\Lambda$-ghost as
\[
\Hom(\mathcal P^\Lambda, \mathbb C_X[i]) = \bigoplus_{e_v} \mathbb C[i]
\]
is concentrated in degree $i$.  Hence any nontrivial cup product $c_1 \cup ... \cup c_n$
gives a nontrivial $\mathcal P^\Lambda$-ghost sequence.  The ghost lemma (see e.g. \cite[Lemma 4.11]{Rou08} or \cite[Lemma 2.12]{BFK12}) gives the lower bound.

For the upper bound, we have
\begin{align*}
t(\mathcal P^{\Lambda}) & \leq t(\mathcal P^{\Lambda_{CW}}) & \text{ by Lemma~\ref{lem: subskeleton time}} \\
& = \dim X & \text{by Proposition~\ref{prop: CW gen time}}.
\end{align*}
\end{proof}

\begin{example} \label{ex: torus}
If $X$ is a torus then $H^*(X,k)$ is isomorphic to the exterior algebra $\Lambda^\bullet k^{\dim X}$.  Hence $LL(H^*(X,k)) = \dim X +1$ and $t(\mathcal P) = \dim X$. 
\end{example}

\section{Application to toric varieties}
We now consider the Langragian skeleta appearing as mirrors of toric varieties.  For this, let $U \subseteq \mathbb A^n$ be an open toric subset of $\mathbb A^n$ defined by a subfan $\widetilde \Sigma \subseteq \Sigma_{\mathbb A^n}$.  Let $G$ be any subgroup of $\mathbb G_m^n$, the maximal torus of $\mathbb A^n$. We define $\widetilde M := \Hom(\mathbb G_m^n, \mathbb G_m)$ and $\widehat G:= \Hom(G, \mathbb G_m)$.  Since $\Hom(-, \mathbb G_m)$ is exact, applying it to the inclusion we get an exact sequence
\[
0 \to M \xrightarrow{\beta^\vee} \widetilde M \xrightarrow{\mu} \widehat G \rightarrow 0
\]
where $M$ is defined as the kernel of $\mu$.

Taking $\Hom(-, \mathbb Z)$ gives a four-term exact sequence
\[
0\rightarrow \widehat G^\vee \xrightarrow{\mu^\vee} \widetilde{N} \xrightarrow{\beta} N \rightarrow \text{coker } \beta \rightarrow 0.
\]
We adopt the setup in \cite{Kuw20}, asking that $\beta_\mathbb{R}$ induces a combinatorial isomorphism of fans. That is, we assume
\begin{enumerate}
    \item The restricted map $\beta_{\mathbb R}|_{\widetilde \sigma}: \widetilde \sigma \xrightarrow{\sim} \sigma$ is an isomorphism of cones for all $\widetilde \sigma\in \widetilde \Sigma$.
    \item There is a poset isomorphism
    $$\widetilde \Sigma \simeq \Sigma$$
    where $\Sigma$ is a fan consisting of all cones $\sigma=\beta_\mathbb{R}(\widetilde{\sigma})$.
    
\end{enumerate}

The above assumption implies $\Sigma$ is simplicial. The quotient stack $[U/G]$ is a smooth Deligne-Mumford toric stack (see \cite{GS15}) whose coarse moduli space is the toric variety $X_\Sigma$. We note that if $X_\Sigma$ is smooth, then $X_\Sigma$ is isomorphic to $[U/G]$.

Furthermore, since $\widetilde \Sigma$ is a subfan of the standard fan for $\mathbb A^n$, all cones $\widetilde \sigma\in \widetilde \Sigma$ are of the form $\widetilde \sigma_I= \mathbb R_{\geq 0}^I\subset \widetilde N_\mathbb{R}$, with dual cone $\widetilde \sigma_I^\vee=\mathbb R^{I}_{\geq 0} \times \mathbb R^{I^c}$.
Let 
\begin{align*}
\widetilde \Lambda & := \bigcup_{\widetilde \sigma_I \in \widetilde \Sigma, \widetilde D\in \widetilde M} (\widetilde \sigma_I^\perp +\widetilde D) \times (-\widetilde \sigma_I)\\
& = \bigcup_{\widetilde \sigma_I \in \widetilde \Sigma, \widetilde D\in \widetilde M} (\mathbb R^{I^c} +\widetilde D)\times \mathbb R_{\leq 0}^{I}\\
\end{align*}
be the $\widetilde{M}$-periodic FLTZ skeleton associated to the fan $\widetilde \Sigma$. Let $\pi:\widetilde M_\mathbb{R} \rightarrow \widetilde M_\mathbb R/M$ be the projection, and define

\[
\Lambda_{\widetilde \Sigma} := \widetilde  \Lambda/M.
\]
Then, the fiber skeleton
$$\Lambda_\Sigma:=\Lambda_{\widetilde \Sigma}|_{M_\mathbb{R}/M}$$
is the (non-equivariant) mirror skeleton to $[U/G]$.

\begin{lemma}\label{lem: probe mod M}
    $$\mathcal P^{\Lambda_{\widetilde \Sigma}}_{\pi(p)} =\pi_! \mathcal P^{\widetilde \Lambda}_p. $$
\end{lemma}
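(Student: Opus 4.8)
The plan is to identify both sides as representing the same functor on $Sh^\diamond_{\Lambda_{\widetilde\Sigma}}(\widetilde M_\mathbb{R}/M)$, by unwinding the universal property of the stalk probe together with the adjunction $(\pi_!, \pi^!)$ or $(\pi^*, \pi_*)$. First I would recall that $\pi : \widetilde M_\mathbb{R} \to \widetilde M_\mathbb{R}/M$ is a covering map (it is the quotient by the free, properly discontinuous action of the lattice $M$), so $\pi$ is a local homeomorphism; in particular $\pi^! = \pi^*$ up to a shift by the relative dimension, which is $0$ here, so $\pi^! = \pi^*$. Moreover $\pi_!$ sends $\widetilde M$-periodic constructible sheaves with singular support in $\widetilde\Lambda$ to constructible sheaves with singular support in $\Lambda_{\widetilde\Sigma} = \widetilde\Lambda/M$, so the right-hand side does land in $Sh^\diamond_{\Lambda_{\widetilde\Sigma}}$; one should also check it is compact, i.e. lands in $Sh^w$, which follows since $\mathcal P^{\widetilde\Lambda}_p$ is compact and $\pi_!$ preserves compactness on these categories (it is the left adjoint of a colimit-preserving functor between compactly generated categories, cf. the discussion of $\rho^L$ in the excerpt).

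The core computation is then the following chain, for $\mathcal F \in Sh^\diamond_{\Lambda_{\widetilde\Sigma}}(\widetilde M_\mathbb{R}/M)$:
\begin{align*}
\Hom_{\Lambda_{\widetilde\Sigma}}(\pi_! \mathcal P^{\widetilde\Lambda}_p, \mathcal F)
&= \Hom_{\widetilde\Lambda}(\mathcal P^{\widetilde\Lambda}_p, \pi^! \mathcal F) & \text{by adjunction } (\pi_!, \pi^!)\\
&= \Hom_{\widetilde\Lambda}(\mathcal P^{\widetilde\Lambda}_p, \pi^* \mathcal F) & \text{since } \pi^! = \pi^* \\
&= (\pi^* \mathcal F)_p & \text{by definition of the stalk probe}\\
&= \mathcal F_{\pi(p)} & \text{since } \pi \text{ is a local homeo}.
\end{align*}
Here the first step requires that $\pi^! \mathcal F$ lies in $Sh^\diamond_{\widetilde\Lambda}(\widetilde M_\mathbb{R})$, which holds because pulling back along the covering map sends sheaves with singular support in $\Lambda_{\widetilde\Sigma}$ to sheaves with singular support in the preimage $\widetilde\Lambda$, and because the adjunction $(\pi_!, \pi^!)$ on the ambient (co)constructible categories restricts to these singular-support subcategories (the inclusions are the images of localization/colocalization functors, so adjunctions descend). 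Since the stalk probe $\mathcal P^{\Lambda_{\widetilde\Sigma}}_{\pi(p)}$ is by definition the object of $Sh^w_{\Lambda_{\widetilde\Sigma}}$ representing $\mathcal F \mapsto \mathcal F_{\pi(p)}$, the Yoneda lemma gives $\pi_! \mathcal P^{\widetilde\Lambda}_p \cong \mathcal P^{\Lambda_{\widetilde\Sigma}}_{\pi(p)}$, as claimed.

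The step I expect to be the main obstacle is the compatibility of the adjunction $(\pi_!, \pi^!)$ with the singular-support conditions — that is, checking that $\pi^!$ actually lands in $Sh^\diamond_{\widetilde\Lambda}$ and that no localization correction is needed when passing from the ambient constructible categories to the $\Lambda$-subcategories. One clean way around this is to work in the large categories $Sh^\diamond_{\widetilde\Lambda}(\widetilde M_\mathbb{R})$ and $Sh^\diamond_{\Lambda_{\widetilde\Sigma}}(\widetilde M_\mathbb{R}/M)$ directly, using that for a covering map $\pi^*$ is exact and preserves constructibility and the relevant (periodic) singular support, and that $\pi_! \dashv \pi^! = \pi^*$ holds at the level of all sheaves and restricts cleanly because $\pi^*$ of a $\Lambda_{\widetilde\Sigma}$-constructible sheaf is automatically $\widetilde\Lambda$-constructible. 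The identification $\pi^! = \pi^*$ for an étale (here, covering) map is standard, so the only genuinely geometric input is that $\widetilde\Lambda$ is exactly the $\pi$-preimage of $\Lambda_{\widetilde\Sigma}$, which is immediate from the definition $\Lambda_{\widetilde\Sigma} = \widetilde\Lambda/M$ and the $\widetilde M$-periodicity (hence $M$-periodicity) of $\widetilde\Lambda$.
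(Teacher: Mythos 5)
Your proposal is correct and follows essentially the same argument as the paper: the adjunction $(\pi_!,\pi^!)$, the identification $\pi^!=\pi^*$ for the covering map $\pi$, and the representing property of the stalk probe, concluding by Yoneda. The extra care you take with singular-support compatibility is a reasonable elaboration of points the paper leaves implicit.
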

\begin{proof}
    \begin{align*}
        \Hom(\pi_! \mathcal P^{\widetilde \Lambda}_p, F) & = \Hom( \mathcal P^{\widetilde \Lambda}_p, \pi^! F) & \text{ by adjunction}\\
        & = \Hom( \mathcal P^{\widetilde \Lambda}_p, \pi^* F) & \text{ since $\pi$ is a covering map}\\
        & = (\pi^* F)_p & \text{ by definition of stalk probe}\\
        & = F_{\pi(p)} & 
    \end{align*}
\end{proof}

Since $\pi_! \mathcal{P}_{\widetilde D}^{\widetilde \Lambda}$ does not depend on the choice of lift $\widetilde{D}$, we write $\mathcal P_D^{\Lambda_{\widetilde \Sigma}}:=\pi_! \mathcal{P}_{\widetilde D}^{\widetilde \Lambda}$.

We will use the following noncharacteristic deformation lemma.

\begin{lemma}[\cite{She22}, Theorem 14]\label{lem: slice equivalence}
Let $i_b: \mu_{\mathbb R}^{-1}(b)/M \hookrightarrow \widetilde{M}_\mathbb{R}/M$. The pullback $$i_b^*: Sh^\diamond_{\Lambda_{\widetilde \Sigma}}\rightarrow Sh^\diamond_{\Lambda_{\widetilde \Sigma}|_{\mu_{\mathbb R}^{-1}(b)}}$$
is an equivalence of categories.
\end{lemma}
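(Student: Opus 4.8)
The plan is to deduce this from the classical noncharacteristic deformation lemma of Kashiwara--Schapira, the key point being that the FLTZ skeleton $\Lambda_{\widetilde\Sigma}$ is noncharacteristic for the fibration $\mu$. First I would record the geometry of $\mu$. Since $\mu(M)=0$, the linear map $\mu_{\mathbb R}$ descends to $\widetilde M_{\mathbb R}/M \to \widehat G_{\mathbb R}$, and its fibers are the translates of the compact torus $M_{\mathbb R}/M$; as $\widehat G_{\mathbb R}$ is a contractible vector space, this is a trivial bundle with compact fiber, in particular a proper submersion. At every point, the conormal to the fiber through that point is the annihilator inside $\widetilde N_{\mathbb R}$ of $M_{\mathbb R}=\im\beta_{\mathbb R}^\vee$, which by the four-term exact sequence $0\to\widehat G^\vee\xrightarrow{\mu^\vee}\widetilde N\xrightarrow{\beta}N\to\operatorname{coker}\beta\to 0$ is exactly $\ker\beta_{\mathbb R}=\im\mu_{\mathbb R}^\vee$.

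Next I would verify the noncharacteristic condition. By construction every covector appearing in $\widetilde\Lambda$, and hence in $\Lambda_{\widetilde\Sigma}=\widetilde\Lambda/M$, lies in one of the cones $-\widetilde\sigma_I\subset\widetilde N_{\mathbb R}$ with $\widetilde\sigma_I\in\widetilde\Sigma$. Assumption (1) says $\beta_{\mathbb R}$ restricts to an isomorphism $\widetilde\sigma_I\xrightarrow{\sim}\sigma_I$, so in particular $\widetilde\sigma_I\cap\ker\beta_{\mathbb R}=\{0\}$; since $\ker\beta_{\mathbb R}$ is a linear subspace this also gives $(-\widetilde\sigma_I)\cap\ker\beta_{\mathbb R}=\{0\}$. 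Therefore $\Lambda_{\widetilde\Sigma}$ meets the conormal variety of the $\mu$-fibers only along the zero section, i.e. $\Lambda_{\widetilde\Sigma}$ is noncharacteristic for $\mu$.

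With this in hand, restriction to any single fiber should be an equivalence: for $b,b'\in\widehat G_{\mathbb R}$ one joins them by a segment and applies the noncharacteristic deformation lemma along the tube $\mu^{-1}(\text{segment})$, where noncharacteristic-ness guarantees that the relevant $R\mathcal Hom$-complexes are preserved as the slicing hyperplanes sweep across; this shows $i_b^*$ is fully faithful and upgrades the singular-support estimate $SS(i_b^*F)\subseteq \Lambda_{\widetilde\Sigma}|_{\mu_{\mathbb R}^{-1}(b)}$ to an equality, identifying the essential image with $Sh^\diamond_{\Lambda_{\widetilde\Sigma}|_{\mu_{\mathbb R}^{-1}(b)}}$. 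Essential surjectivity then follows by extending a sheaf from the fiber over the contractible base using the trivialization of $\mu$, or equivalently by producing the (co)continuous extension functor right adjoint to $i_b^*$. The hard part will be this categorical upgrade: the textbook noncharacteristic deformation lemma is a statement about cohomology of sections over increasing families, so some care is needed to promote it to an equivalence of the large categories $Sh^\diamond$ with singular-support control on the nose. This is where I would invoke the microlocal cutoff / Morse-theoretic machinery (as in the work of Nadler and of Ganatra--Pardon--Shende) and use properness of $\mu$ together with the conical structure of $\widetilde\Lambda$ to ensure the relevant limits converge.
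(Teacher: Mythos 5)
The paper does not prove this lemma at all --- it is imported verbatim as \cite[Theorem 14]{She22} --- so there is no internal proof to compare against; your sketch is essentially a reconstruction of Shende's argument, and its core is correct. In particular, your identification of the conormal to the $\mu$-fibers with $\ker\beta_{\mathbb R}=\im\mu_{\mathbb R}^\vee$ and the observation that assumption (1) forces $(-\widetilde\sigma_I)\cap\ker\beta_{\mathbb R}=\{0\}$ is exactly the right noncharacteristicity check. One point you wave at but should make explicit: full faithfulness of $i_b^*$ is obtained by pushing $R\mathcal{H}om(F,G)$ forward along $\mu$ and arguing it is a local system on the contractible base, and for that you need noncharacteristicity of $SS(R\mathcal{H}om(F,G))\subseteq SS(F)^a\widehat{+}SS(G)$, i.e.\ that $(\widetilde\sigma_I-\widetilde\sigma_J)\cap\ker\beta_{\mathbb R}=\{0\}$ for all pairs of cones --- this does \emph{not} follow from condition (1) alone for a single cone, but it does follow from the combination of (1) and the poset isomorphism (2): if $\beta(\xi)=\beta(\eta)$ with $\xi\in\widetilde\sigma_I$, $\eta\in\widetilde\sigma_J$, then $\beta(\xi)$ lies in the common face $\sigma_I\cap\sigma_J$, and uniqueness of preimages in each cone forces $\xi=\eta$. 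With that supplied, your outline matches the cited proof; the remaining ``categorical upgrade'' to the large categories $Sh^\diamond$ is genuinely the content of \cite[Theorem 14]{She22} and is reasonable to defer to the reference.
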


In particular, taking $b=0$ gives an equivalence of categories
$$i^*: Sh^\diamond_{\Lambda_{\widetilde \Sigma}}\xrightarrow{\simeq} Sh^\diamond_{\Lambda_\Sigma}.$$

We now recall the stratification refining $\Lambda_\Sigma$ defined by Bondal-Ruan \cite{Bon06}.
Let $D_i$ for $1\leq i\leq n$ be the standard basis of $\mathbb{Z}^{n}$.  The \newterm{Bondal-Ruan map} is defined by
\begin{align*}
\Phi: \mu_{\mathbb R}^{-1}(0)/M & \rightarrow \widehat{G}\\
\sum_i a_i D_i + M & \mapsto \mu(-\sum_i\floor{a_i}D_i)
\end{align*}
 where $\floor{a_i}$ is the floor of $a_i$. We take the associated stratification $S_{BR}$ on $\mathbb{T}^n$ whose strata are given by the level sets of $S_D := \Phi^{-1}(D)$.  

\begin{lemma} \label{lem: probe restrict}
Let $[p]\in \widetilde M_\mathbb R /M$ denote the coset $p+M$, where $p\in \widetilde M_\mathbb{R}$. The probe sheaf $\mathcal{P}^{\Lambda_{\Sigma}}_{[p]}$ at the point $[p]\in S_D$ is given by
$$
\mathcal{P}^{\Lambda_{\Sigma}}_{[p]}=  i^*\mathcal{P}^{\Lambda_{\widetilde \Sigma}}_D
$$
\end{lemma}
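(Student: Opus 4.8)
The plan is to combine the two previously established identifications: Lemma~\ref{lem: probe mod M}, which computes the probe on $\Lambda_{\widetilde\Sigma}$ as a pushforward of the probe on the periodic skeleton $\widetilde\Lambda$, and Lemma~\ref{lem: slice equivalence} (with $b=0$), which says $i^*$ is an equivalence carrying $Sh^\diamond_{\Lambda_{\widetilde\Sigma}}$ to $Sh^\diamond_{\Lambda_\Sigma}$. The key point is that under an equivalence of sheaf categories that is restriction along a closed (or locally closed) inclusion, the stalk-representing object is sent to the stalk-representing object, provided the point $[p]$ lies in the image of $i$ — which is exactly the hypothesis $[p]\in S_D\subset M_\mathbb{R}/M$, i.e. $[p]\in\mu_\mathbb{R}^{-1}(0)/M$.

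First I would establish the universal property directly. Let $F\in Sh^\diamond_{\Lambda_\Sigma}(M_\mathbb{R}/M)$ be arbitrary. Since $i^*$ is an equivalence, write $F = i^* \widetilde F$ for a unique $\widetilde F\in Sh^\diamond_{\Lambda_{\widetilde\Sigma}}(\widetilde M_\mathbb{R}/M)$. Then
\begin{align*}
\Hom_{\Lambda_\Sigma}\bigl(i^*\mathcal P^{\Lambda_{\widetilde\Sigma}}_D,\, F\bigr) &= \Hom_{\Lambda_\Sigma}\bigl(i^*\mathcal P^{\Lambda_{\widetilde\Sigma}}_D,\, i^*\widetilde F\bigr) \\
&= \Hom_{\Lambda_{\widetilde\Sigma}}\bigl(\mathcal P^{\Lambda_{\widetilde\Sigma}}_D,\, \widetilde F\bigr) \\
&= \widetilde F_{[p]} \\
&= (i^*\widetilde F)_{[p]} = F_{[p]},
\end{align*}
where the second line uses that $i^*$ is fully faithful, the third is the defining property of the probe $\mathcal P^{\Lambda_{\widetilde\Sigma}}_D = \mathcal P^{\Lambda_{\widetilde\Sigma}}_{\pi(\widetilde D)}$ at the point $[p]$ (here I use that $[p]\in S_D$ means precisely that the probe at $[p]$ is the one indexed by $D$, via Lemma~\ref{lem: probe mod M} and the Bondal-Ruan map), and the last equality holds because $i$ is an inclusion of a subspace containing $[p]$, so stalks at $[p]$ agree before and after restriction. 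Since this holds functorially in $F$, the object $i^*\mathcal P^{\Lambda_{\widetilde\Sigma}}_D$ represents the stalk functor at $[p]$ on $Sh^\diamond_{\Lambda_\Sigma}$, hence equals $\mathcal P^{\Lambda_\Sigma}_{[p]}$ by uniqueness of representing objects.

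The main thing to be careful about — and the step I expect to require the most attention — is the bookkeeping that identifies the index $D$ of the probe with the stratum $S_D = \Phi^{-1}(D)$ containing $[p]$. Concretely, one must check that if $[p]\in S_D$ then the stalk probe $\mathcal P^{\Lambda_{\widetilde\Sigma}}_{[p]}$ (a priori defined by its base point) coincides with $\mathcal P^{\Lambda_{\widetilde\Sigma}}_D := \pi_!\mathcal P^{\widetilde\Lambda}_{\widetilde D}$; this is where the explicit form of the Bondal-Ruan map via floor functions enters, matching the combinatorics of which $\widetilde M$-translate $\widetilde\sigma_I^\perp + \widetilde D$ of the periodic FLTZ skeleton "sees" the point $p$. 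Everything else is formal manipulation of adjunctions and the fact that restriction to a subspace does not change stalks at points of that subspace.
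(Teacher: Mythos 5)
Your proposal is correct and follows essentially the same route as the paper: use that $i^*$ is an equivalence to transport the Hom computation back to $Sh^\diamond_{\Lambda_{\widetilde\Sigma}}$, invoke the representing property of the probe, and note that restriction does not change stalks at points of the slice. The bookkeeping step you flag (that $[p]\in S_D$ forces $\mathcal P^{\Lambda_{\widetilde\Sigma}}_{[p]}=\mathcal P^{\Lambda_{\widetilde\Sigma}}_D$) is exactly what the paper supplies, via Lemma~\ref{lem: probe stratification} and writing $p=\widetilde D+x$ with $x\in(-1,0]^n$.
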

\begin{proof}
First choose $\widetilde{D}$ such that $\mu(\widetilde D)= D$. Then, by Lemma \ref{lem: probe mod M}, $\mathcal{P}_{[p]}^{\Lambda_{\widetilde \Sigma}}=\pi_! \mathcal{P}_p^{\widetilde \Lambda}$. For $p\in S_{\widetilde D}\subset \widetilde{D}+(-1,0]^n$ for some $\widetilde D\in \widetilde M$, write $p=\widetilde D + x$, where $x \in (-1,0]^n$. By Lemma \ref{lem: probe stratification}, we have

\begin{equation} \label{eq: probe} \mathcal{P}_{[p]}^{\Lambda_{\widetilde \Sigma}}=\pi_! \mathcal{P}_p^{\widetilde \Lambda} =\pi_! \mathcal{P}_{\widetilde D}^{\widetilde \Lambda} = \mathcal{P}_D^{\Lambda_{\widetilde \Sigma}}.\end{equation}

\begin{align*}
    \Hom(i^*\mathcal P_D^{\Lambda_{\widetilde \Sigma}}, G)& =\Hom(i^*\mathcal P_D^{\Lambda_{\widetilde \Sigma}}, i^*F) \text{ for some $F$ } & \text{ since $i^*$ is essentially surjective}\\
    & = \Hom(\mathcal P_D^{\Lambda_{\widetilde \Sigma}}, F) & \text{ since $i^*$ is an equivalence }\\
    & = \Hom(\mathcal P_{[p]}^{\Lambda_{\widetilde \Sigma}}, F) & \text{by \eqref{eq: probe}}\\
    & = F_{[p]} & \text{ by the definition of stalk probe}\\
    & = (i^*F)_{[p]} & \text{since $[p]\in S_D\subset \mu^{-1}_\mathbb R(0)/M$} \\
    & = G_{[p]} & \text{by definition of $F$}
\end{align*}
\end{proof}

\begin{lemma} \label{lem: image of kappa}
Let 
\[
\kappa: Coh([U/G]) \to (Sh^w_{\Lambda_\Sigma})^{op}
\]
be the \emph{covariant} equivalence (mirror functor) from \cite{Kuw20}. Then,
$$
\kappa(\bigoplus_{D \in \im \Phi} \mathcal O(D)) =  \mathcal P^{\Lambda_\Sigma}
$$
\end{lemma}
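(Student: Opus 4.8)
The goal is to identify, under Kuwagaki's mirror functor $\kappa$, the image of the direct sum of line bundles $\bigoplus_{D\in\operatorname{im}\Phi}\mathcal O(D)$ with the probe generator $\mathcal P^{\Lambda_\Sigma}=\bigoplus_{S_\alpha\in S_{BR}}\mathcal P^{\Lambda_\Sigma}_{e_{\alpha,v}}$. The natural strategy is to match summands on the two sides. On the coherent side, the summands are indexed by $D\in\operatorname{im}\Phi\subseteq\widehat G$; on the constructible side, by Lemma~\ref{lem: probe restrict}, the distinct summands of $\mathcal P^{\Lambda_\Sigma}$ are exactly the sheaves $i^*\mathcal P^{\Lambda_{\widetilde\Sigma}}_D$ as $D$ ranges over the image of the Bondal–Ruan map $\Phi$ (since $\mathcal P^{\Lambda_\Sigma}_{[p]}$ depends only on the stratum $S_D$ containing $[p]$, and $S_D=\Phi^{-1}(D)$). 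So the combinatorial indexing sets already agree, and it remains to verify the per-summand correspondence $\kappa(\mathcal O(D))=i^*\mathcal P^{\Lambda_{\widetilde\Sigma}}_D$ (up to the $\operatorname{op}$ and appropriate normalization).

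\smallskip

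The key steps I would carry out, in order: (1) Recall from \cite{Kuw20} the explicit description of $\kappa$ on the line bundles $\mathcal O(D)$ for $D\in\widehat G$ — in the FLTZ/Kuwagaki dictionary these go to the standard/costandard constructible sheaves supported on the shifted cones appearing in $\widetilde\Lambda$, i.e.\ objects built from $\mathbb C$ on $(\widetilde\sigma^\perp+\widetilde D)\times(-\widetilde\sigma)$-type pieces, descended mod $M$ and restricted to the zero fiber. (2) Use Lemma~\ref{lem: probe mod M} to write $\mathcal P^{\Lambda_{\widetilde\Sigma}}_D=\pi_!\mathcal P^{\widetilde\Lambda}_{\widetilde D}$, and give an explicit model for the stalk probe $\mathcal P^{\widetilde\Lambda}_{\widetilde D}$ on $\widetilde M_\mathbb R$: since $\widetilde\Lambda$ is the FLTZ skeleton of the smooth fan $\widetilde\Sigma$ in $\mathbb A^n$, the stalk probe at a point of the appropriate chamber is a corepresentative of the stalk functor, which one can write down as a Čech-type complex of the $\mathbb C_{(\mathbb R^{I^c}+\widetilde D)\times\mathbb R^I_{\leq 0}}$-pieces. (3) Match this model, after applying $\pi_!$ and $i^*$ (using Lemma~\ref{lem: probe restrict} and Lemma~\ref{lem: slice equivalence}), with $\kappa(\mathcal O(D))$. (4) Assemble the direct sum, being careful that $D$ runs exactly over $\operatorname{im}\Phi$: the point is that $\operatorname{im}\Phi$ is precisely the set of $D$ for which the stratum $S_D$ is nonempty, equivalently the set of $D$ for which $\mathcal O(D)$ arises as a summand of the toric Frobenius pushforward $F_*\mathcal O$ — this is the Bondal–Ruan bijection, and it is what pins down the correct coherent-side index set.

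\smallskip

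The main obstacle I anticipate is step (3): carefully unwinding Kuwagaki's functor on line bundles and checking that it produces exactly the (co)standard-sheaf model of the stalk probe, including getting all the cohomological shifts, the direction of arrows (recall $\kappa$ lands in the \emph{opposite} category, so $\Hom$'s get reversed and a stalk probe — which corepresents a stalk — should correspond to a line bundle, the natural "generator-type" object, with the roles of $i_!$ and $i_*$ possibly swapped), and the $\widetilde D$ vs.\ $D$ bookkeeping under $\mu$. A secondary subtlety is verifying that $\pi_!\mathcal P^{\widetilde\Lambda}_{\widetilde D}$ is genuinely independent of the lift $\widetilde D$ in a way compatible with how $\mathcal O(D)$ depends only on $D\in\widehat G$ and not on a lift to $\widetilde M$; this should follow from the $\widetilde M$-periodicity built into $\widetilde\Lambda$ together with Lemma~\ref{lem: probe mod M}, but it needs to be stated cleanly. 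Once the single-summand identification is nailed down, taking the direct sum over $D\in\operatorname{im}\Phi$ and invoking the Bondal–Ruan stratification $S_{BR}$ to see that these are exactly the distinct summands of $\mathcal P^{\Lambda_\Sigma}$ finishes the proof.
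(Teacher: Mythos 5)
Your proposal is correct and follows essentially the same route as the paper: match summands via the Bondal--Ruan stratification (Lemma~\ref{lem: probe restrict}), and establish the per-summand identification by unwinding Kuwagaki's functor on the affine pieces and gluing. The paper carries out your step (3) by writing both sides as (co)limits over the cone poset $C(\Sigma)$ --- the \v{C}ech resolution of $\mathcal O(D)$ on the coherent side, and the colimit decomposition of the stalk probe into local probes (imported from \cite[Proposition 4.5]{HZ22}) on the constructible side --- and then commuting $\kappa$ past the limit, which is exactly the "Čech-type model" comparison you anticipate as the main obstacle.
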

\begin{proof}
    We observe that that each $\kappa_I$ in the mirror functor \[
    \Ind \kappa =\varprojlim_{C(\Sigma)}\Ind \kappa_{\sigma_I}
    \]
    defined by Kuwagaki \cite{Kuw20} takes $\mathcal{O}_{[\mathbb{C}^I\times (\mathbb C^*)^{I^c}/G]}(D)$ to $\mathcal P_D^{\Lambda_{\Sigma_I}|_{M_\mathbb R/M}}=i^* \mathcal P_D^{\Lambda_{\widetilde \Sigma_I}}$. 
Then the \v{C}ech resolution of $\mathcal O_{[U/G]}(D)$ realizes 
\begin{equation} \label{eq: cech}
\mathcal O_{[U/G]}(D) = \varprojlim_{C(\Sigma)}\mathcal{O}_{[\mathbb{C}^I\times (\mathbb C^*)^{I^c}/G]}(D).
\end{equation}
On the other hand for $[p] \in S_D$,
\begin{align}
\mathcal P_{[p]}^{\Lambda_\Sigma} & = i^* \mathcal P_D^{\Lambda_{\widetilde \Sigma}} & \text{ by Lemma~\ref{lem: probe restrict}} \notag\\
& = i^* \pi_! \mathcal P_{\widetilde D}^{\widetilde \Lambda_{I}} & \text{ by Lemma \ref{lem: probe mod M}}\notag\\
& = i^* \pi_!\varinjlim_{C(\Sigma)} \mathcal P_{\widetilde D}^{\widetilde \Lambda_{I}} & \text{ by \cite[Proposition 4.5]{HZ22}} \notag\\
&\  & \text{Note: \cite{HZ22} used opposite indexing i.e. $\sigma_I$ here is $\sigma_{I^c}$ in \cite{HZ22}}\notag \\
& =  i^* \varinjlim_{C(\Sigma)} \pi_! \mathcal P_{\widetilde D}^{\widetilde \Lambda_{I}} & \text{since $\pi_!$ is a left adjoint hence preserves colimit}\notag\\
& = i^* \varinjlim_{C(\Sigma)} \mathcal P_D^{\Lambda_{\widetilde \Sigma_I}} & \text{by Lemma \ref{lem: probe mod M}} \notag\\
& = \varinjlim_{C(\Sigma)} i^* \mathcal P_D^{\Lambda_{\widetilde \Sigma_I}} & \text{since $i^*$ is an equivalence} \label{eq: probe limit}
    \end{align}
In the opposite category, the colimit becomes the limit.

Hence
\begin{align*}
\kappa(
\mathcal O_{[U/G]}(D)) & = \kappa( \varprojlim_{C(\Sigma)}\mathcal{O}_{[\mathbb{C}^I\times (\mathbb C^*)^{I^c}/G]}(D))  & \text{ by \eqref{eq: cech}}\\
& = 
\varinjlim_{C(\Sigma)}\kappa(\mathcal{O}_{[\mathbb{C}^I\times (\mathbb C^*)^{I^c}/G]}(D)) & \text{ since $\kappa$ is an equivalence}\\
& = \varinjlim_{C(\Sigma)} i^*\mathcal P_D^{\Lambda_{\widetilde \Sigma_I}}  & \text{by definition of $\kappa$}\\
& = \mathcal P_{[p]}^{\Lambda_\Sigma} & \text{by \eqref{eq: probe limit}}
\end{align*}


\end{proof}

The following corollary will appear in forthcoming works of Ballard-Duncan-McFaddin \cite{BDM} and Hanlon-Hicks-Lazarev \cite{HHL-b}; both results were publicly claimed before us and use algebro-geometric methods. See also \cite{BC21, HHL-a} for more general discussions on the Rouquier dimension of Fukaya categories and applications via mirror symmetry.

\begin{corollary}\label{cor: toric Frobenius}
    The object $\bigoplus_{D\in \im \Phi} \mathcal{O}(D)$ generates $Coh(X_\Sigma)$.
\end{corollary}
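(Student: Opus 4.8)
The plan is to deduce Corollary \ref{cor: toric Frobenius} from Lemma \ref{lem: image of kappa} together with the fact that the probe generator $\mathcal P^{\Lambda_\Sigma}$ is genuinely a generator of $Sh^w_{\Lambda_\Sigma}$. First I would recall that $\kappa$ is an equivalence of categories $Coh([U/G]) \simeq (Sh^w_{\Lambda_\Sigma})^{op}$; since an object generates a triangulated category if and only if its image under an equivalence generates the target, it suffices by Lemma \ref{lem: image of kappa} to show that $\mathcal P^{\Lambda_\Sigma}$ generates $Sh^w_{\Lambda_\Sigma}$. For the toric variety statement (as opposed to the stack), I would then invoke that when $X_\Sigma$ is not smooth one passes from $[U/G]$ to its coarse moduli space; but in fact the cleanest route is to observe that the corollary as stated is about $Coh(X_\Sigma)$, so I would either restrict to the case $G$ chosen so that $[U/G] = X_\Sigma$ has the same derived category (e.g. via a toric resolution or the identification in the simplicial case), or simply note that $X_\Sigma$ is a quotient and pull back generators along the natural map; I expect the paper intends $\Sigma$ simplicial with $[U/G]$ having $Coh$ equivalent to $Coh(X_\Sigma)$ in the relevant sense, so I will phrase the argument for $[U/G]$ and remark that it descends.

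The heart of the matter is that $\mathcal P^{\Lambda_\Sigma} = \bigoplus_{S_\alpha \in S_{CW}} \mathcal P^{\Lambda_\Sigma}_{e_{\alpha,v}}$ split-generates $Sh^w_{\Lambda_\Sigma}$. I would argue this in two steps. Step one: the stalk probes $\{\mathcal P^{\Lambda_\Sigma}_x\}$ over all $x \in X$ collectively generate, because an object $F \in Sh^\diamond_{\Lambda_\Sigma}$ with $\Hom(\mathcal P^{\Lambda_\Sigma}_x, F) = F_x = 0$ for every $x$ must be the zero sheaf (a constructible sheaf with vanishing stalks everywhere is zero), so the stalk probes form a generating family; passing to compact objects, the finitely many $\mathcal P^{\Lambda_\Sigma}_D$, $D \in \im\Phi$, suffice since by Lemma \ref{lem: probe restrict} and the $\mathbb L$-probe stratification (Lemma \ref{lem: probe stratification}) the stalk probe $\mathcal P^{\Lambda_\Sigma}_{[p]}$ depends only on the $S_{BR}$-stratum containing $[p]$, and there are only $|\im\Phi|$ such strata. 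Step two: the collection $\{\mathcal P^{\Lambda_\Sigma}_D\}$ is precisely $\{\rho^{L,w}(\mathcal P^{\Lambda_{CW}}_{e_{\alpha,v}})\}$ after a coarsening, and $\rho^{L,w}$ has dense image by \cite[Corollary 4.22]{GPS}; since $G_A = \bigoplus \mathcal P_\alpha$ generates $Sh^c_{S_{CW}}(X)$ (it is an HPA with $A = \End(G_A)$, so $G_A$ is a classical generator), its image under $\rho^{L,w}$ generates $Sh^w_{\Lambda_\Sigma}$, and this image is a summand-complex of $\mathcal P^{\Lambda_\Sigma}$.

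The main obstacle I anticipate is bookkeeping the relationship between the CW-stratification skeleton $\Lambda_{CW}$ on the real torus $\widetilde M_\mathbb R/M$ (or its slice $M_\mathbb R/M$), the FLTZ skeleton $\Lambda_\Sigma$ as a closed subskeleton of it, and the Bondal–Ruan stratification $S_{BR}$: one must check that $S_{BR}$ is exactly the $\Lambda_\Sigma$-probe stratification, so that the index set $\im\Phi$ matches the summands of the probe generator in Definition \ref{def:probe generator}. This is essentially the content assembled from Lemma \ref{lem: probe stratification} and Lemma \ref{lem: probe restrict}, but making the CW structure on the torus explicit (it is the one whose cells are the images of the unit-cube faces under $x \mapsto \widetilde D + x$, $x \in (-1,0]^n$) and confirming it satisfies the axiom of the frontier requires care. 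Once that identification is in place, generation of $Coh([U/G])$ — hence of $Coh(X_\Sigma)$ — by $\bigoplus_{D \in \im\Phi}\mathcal O(D)$ follows formally by transporting the generation statement for $\mathcal P^{\Lambda_\Sigma}$ through $\kappa^{-1}$ and Lemma \ref{lem: image of kappa}.
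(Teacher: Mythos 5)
Your proposal is correct and follows essentially the same route as the paper: the published proof is a one-line appeal to the equivalence $\kappa$ together with Lemmas~\ref{lem: probe mod M} and~\ref{lem: image of kappa}, with the generation of $\mathcal P^{\Lambda_\Sigma}$ being implicit in Theorem~\ref{thm: main theorem} (via $G_A$ generating $Sh^c_{S_{CW}}$ and the dense image of $\rho^{L,w}$), which is exactly the content you spell out in your second step. Your additional ``vanishing stalks'' argument and your attention to the distinction between $X_\Sigma$ and $[U/G]$ are valid refinements of points the paper leaves implicit, but they do not change the underlying argument.
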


\begin{proof}
Since $\kappa$ is an equivalence \cite{Kuw20}, this follows from Lemmas~\ref{lem: probe mod M} and \ref{lem: image of kappa}.    
\end{proof}


As a consequence, we have

\begin{corollary} \label{cor: rdim=dim Cox}
For any toric stack of the form $[U/G]$ in the setup of \cite{Kuw20}, Conjecture~\ref{conj: Orlov} holds i.e.
    $$ \rdim Coh([U/G])=\dim [U/G]$$
    Furthermore, this generation time is achieved by the collection of line bundles $\bigoplus_{D \in \im \Phi} \mathcal O(D)$.
\end{corollary}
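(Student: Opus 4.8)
The plan is to combine Kuwagaki's equivalence $\kappa$ with the torus computation of Example~\ref{ex: torus} for the upper bound and Rouquier's classical lower bound for the reverse inequality; essentially all of the real work is already in place, so the corollary is a matter of assembly. I would begin by pinning down the geometry on the constructible side: $\Lambda_\Sigma$ is a conical Lagrangian in $T^\ast(M_\mathbb R/M)$, and $M_\mathbb R/M$ is a real torus of dimension $\rank M = n - \rank\widehat G = \dim[U/G]$. One checks that $\Lambda_\Sigma$ sits as a closed subskeleton of the stratification skeleton $\Lambda_{CW}$ of a regular CW structure $S_{CW}$ refining the Bondal--Ruan stratification $S_{BR}$ (which already refines $\Lambda_\Sigma$), so that Definition~\ref{def:probe generator} applies and produces the probe generator $\mathcal P^{\Lambda_\Sigma}\in Sh^w_{\Lambda_\Sigma}$. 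Because $\mathcal P^{\Lambda_\Sigma}_{[p]}$ depends only on the Bondal--Ruan stratum $S_D$ containing $[p]$ (Lemma~\ref{lem: probe restrict}), this probe generator is $\bigoplus_{D\in\im\Phi}\mathcal P^{\Lambda_\Sigma}_{[p_D]}$, matching Lemma~\ref{lem: image of kappa}; and it is a classical generator of $Sh^w_{\Lambda_\Sigma}$, being the image under the stop-removal functor $\rho^{L,w}$ (which has dense image by \cite[Corollary 4.22]{GPS}) of the CW probe generator $G_A$, a generator of $Sh^w_{\Lambda_{CW}}$ by \cite{FH22}.

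With this in hand, Theorem~\ref{thm: main theorem} applied to $X = M_\mathbb R/M$ gives, via $H^\ast(M_\mathbb R/M,k)\cong\Lambda^\bullet k^{\dim[U/G]}$ and Example~\ref{ex: torus}, that $t(\mathcal P^{\Lambda_\Sigma}) = \dim[U/G]$, so $\rdim Sh^w_{\Lambda_\Sigma}\leq\dim[U/G]$. By Lemma~\ref{lem: image of kappa}, $\kappa$ sends $\bigoplus_{D\in\im\Phi}\mathcal O(D)$ to $\mathcal P^{\Lambda_\Sigma}$; since $\kappa\colon Coh([U/G])\simeq(Sh^w_{\Lambda_\Sigma})^{op}$ is an equivalence and generation time is preserved by equivalences and is unchanged on passing to the opposite category, the generation time of $\bigoplus_{D\in\im\Phi}\mathcal O(D)$ in $Coh([U/G])$ equals $\dim[U/G]$, whence $\rdim Coh([U/G])\leq\dim[U/G]$.

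For the matching lower bound I would invoke Rouquier's inequality between Rouquier dimension and Krull dimension \cite{Rou08}. Since $[U/G]$ is a tame Deligne--Mumford stack, its coarse moduli map $\pi\colon[U/G]\to X_\Sigma$ has $R\pi_\ast$ exact with $R\pi_\ast\mathcal O_{[U/G]}\cong\mathcal O_{X_\Sigma}$, so $R\pi_\ast L\pi^\ast\simeq\mathrm{id}$ on $Coh(X_\Sigma)$; this realizes $Coh(X_\Sigma)$ as the essential image of a retraction of $Coh([U/G])$, and therefore $\rdim Coh([U/G])\geq\rdim Coh(X_\Sigma)\geq\dim X_\Sigma = \dim[U/G]$ (one could alternatively cite \cite{BDM19} or \cite{BDM} for this lower bound directly). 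Combining the two inequalities yields $\rdim Coh([U/G]) = \dim[U/G]$, realized by $\bigoplus_{D\in\im\Phi}\mathcal O(D)$.

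The main obstacle, I expect, is the first step: checking carefully that $\Lambda_\Sigma$ embeds as a closed subskeleton of a regular-CW stratification skeleton on $M_\mathbb R/M$ so that Theorem~\ref{thm: main theorem} and Definition~\ref{def:probe generator} apply on the nose, and that the cell-indexed probe generator there coincides with the $\im\Phi$-indexed sum in Lemma~\ref{lem: image of kappa}. Everything downstream --- $\kappa$, Lemma~\ref{lem: image of kappa}, the exterior-algebra calculation, and Rouquier's lower bound --- is then quoted directly.
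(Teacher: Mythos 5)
Your proposal follows essentially the same route as the paper for the upper bound: Kuwagaki's equivalence $Coh([U/G])\simeq Sh^w_{\Lambda_\Sigma}$, Theorem~\ref{thm: main theorem} applied to the torus $M_\mathbb R/M$, and Lemma~\ref{lem: image of kappa} to identify the generator with $\bigoplus_{D\in\im\Phi}\mathcal O(D)$. The one step you honestly flag as the remaining obstacle --- exhibiting $\Lambda_\Sigma$ as a closed subskeleton of the stratification skeleton of a \emph{regular} CW structure --- is exactly where the paper does its work: it cites \cite[Corollary 5.7]{FH22} to see that the Bondal--Ruan stratification is a block stratification (hence a coarsening of a CW stratification $S_{CW}$), and then passes to the first barycentric subdivision $S_{CW}^1$ to obtain regularity, giving the chain $\Lambda_\Sigma\subset\Lambda_{S_{BR}}\subset\Lambda_{S_{CW}}\subset\Lambda_{S_{CW}^1}$; so you would need to supply that reference and the subdivision trick to close the gap. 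For the lower bound you take a genuinely different (but valid) detour: rather than invoking the stacky version of Rouquier's inequality directly (\cite[Lemma 2.17]{BF12}, as the paper does), you descend to the coarse moduli space via $R\pi_*L\pi^*\simeq\mathrm{id}$ and apply Rouquier's scheme-level bound to $X_\Sigma$; this works for tame Deligne--Mumford stacks in characteristic zero and has the mild advantage of only needing the scheme-level statement, at the cost of an extra tameness/projection-formula verification that the direct citation avoids.
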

\begin{proof}
By \cite[Lemma 2.17]{BF12}\footnote{For completeness, we give the reference to the stacky case.  The original proof (for schemes) is due to Rouquier \cite[Proposition 7.17]{Rou08}.}, 
\[
\rdim Coh([U/G])\geq \dim [U/G].
\]

Recall that the Bondal-Ruan stratification $S_{BR}$ is a block stratification by \cite[Corollary 5.7]{FH22}.  By definition, this means it is the coarsening of a CW stratification $S_{CW}$.
Let $S_{CW}^1$ be the first barycentric subdivision of $S_{CW}$.  Then $S_{CW}^1$ is a regular CW stratification. 

 We have a chain of embeddings of closed subskeletons 
$$\Lambda_\Sigma\subset \Lambda_{S_{BR}}\subset \Lambda_{S_{CW}} \subset \Lambda_{S_{CW}^1}.$$

The upper bound then follows from Theorem~\ref{thm: main theorem} and the non-equivariant coherent-constructible correspondence \cite[Theorem 1.2]{Kuw20} which provides an equivalence
\[
Coh([U/G]) \simeq Sh^w_{\Lambda_\Sigma}.
\]

\

The fact that this is achieved by $\bigoplus_{D \in \im \Phi} \mathcal O(D)$ is Lemma~\ref{lem: image of kappa}.
\end{proof}

We now deduce the result for any normal toric variety $X_{\Sigma'}$, using the result for stacks. 
The following corollary will be translated entirely into toric methods in forthcoming work of Hanlon-Hicks-Lazarev \cite{HHL-b} inspired by a similar approach seen here; their result was claimed publicly before our paper appeared.

\begin{corollary} \label{cor: rdim=dim variety}
For any normal toric variety $X_{\Sigma'}$, 
Conjecture~\ref{conj: Orlov} holds i.e.
    $$ \rdim Coh(X_{\Sigma'})=\dim X_{\Sigma'}.$$
\end{corollary}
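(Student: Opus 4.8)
The plan is to reduce the statement for an arbitrary normal toric variety $X_{\Sigma'}$ to the stacky case already handled in Corollary~\ref{cor: rdim=dim Cox}. First I would recall that the lower bound $\rdim Coh(X_{\Sigma'}) \geq \dim X_{\Sigma'}$ is immediate from \cite[Lemma 2.17]{BF12} (or \cite[Proposition 7.17]{Rou08}), so the whole content is the upper bound $\rdim Coh(X_{\Sigma'}) \leq \dim X_{\Sigma'}$.

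For the upper bound, the key observation is that any normal toric variety $X_{\Sigma'}$ admits a canonical smooth toric stack presentation: its \emph{canonical stack} (or root stack/Cox stack) $\mathcal X_{\Sigma'} = [U/G]$, where one takes a simplicial refinement realizing $\Sigma'$ combinatorially as in the setup of \cite{Kuw20}. Concretely, since $\Sigma'$ is a fan of (not necessarily simplicial) cones, one first chooses a fan $\widetilde\Sigma$ inside $\Sigma_{\mathbb A^n}$ together with $\beta$ inducing a combinatorial isomorphism of fans onto a simplicial fan $\Sigma$ whose support and face poset reproduce those of $\Sigma'$ after forgetting multiplicities — this is exactly the data in the paragraph preceding Lemma~\ref{lem: probe mod M}, and it always exists for a normal toric variety (e.g. take each cone of $\Sigma'$, refine it simplicially, and lift to a coordinate subspace of $\mathbb A^n$). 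Then $[U/G]$ is a smooth Deligne--Mumford toric stack whose coarse moduli space is $X_{\Sigma'}$, and there is a morphism $p: [U/G] \to X_{\Sigma'}$. Corollary~\ref{cor: rdim=dim Cox} gives $\rdim Coh([U/G]) = \dim [U/G] = \dim X_{\Sigma'}$, since a stack and its coarse space have the same dimension.

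Next I would use the pushforward/pullback adjunction along the coarse moduli morphism $p$ to transport a generator. The standard fact (cf. \cite{BDM19}, or the general principle for generators under adjunctions with dense image) is that if $\mathcal G$ generates $Coh([U/G])$ in generation time $t$, then $Rp_* \mathcal G$ generates $Coh(X_{\Sigma'})$ in generation time at most $t$: indeed $Lp^*$ is left adjoint to $Rp_*$, and since $p_*\mathcal O_{[U/G]} = \mathcal O_{X_{\Sigma'}}$ (the coarse moduli of a toric DM stack with trivial generic stabilizer), the projection formula shows $\mathcal F$ is a summand of $Rp_*Lp^*\mathcal F$ for all $\mathcal F \in Coh(X_{\Sigma'})$, so $Lp^*$ has dense image and $Rp_*$ carries generators to generators without increasing generation time. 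Applying this to $\mathcal G = \bigoplus_{D \in \im\Phi}\mathcal O(D)$ from Corollary~\ref{cor: rdim=dim Cox} yields a generator of $Coh(X_{\Sigma'})$ with generation time $\leq \dim X_{\Sigma'}$, hence $\rdim Coh(X_{\Sigma'}) \leq \dim X_{\Sigma'}$. Combined with the lower bound this proves the equality.

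The main obstacle I anticipate is the bookkeeping around the coarse moduli morphism: verifying that $p_*\mathcal O_{[U/G]} \cong \mathcal O_{X_{\Sigma'}}$ (which uses that the generic stabilizer is trivial, i.e. that $G \to \mathbb G_m^n$ is chosen so that $[U/G]$ has coarse space exactly $X_{\Sigma'}$ and not a gerbe over it) and that $Rp_*$ preserves boundedness and coherence (true since $p$ is proper with finite-dimensional fibers and the stack is tame in characteristic $0$). One must also be slightly careful that the combinatorial-isomorphism hypotheses (1)--(2) of the \cite{Kuw20} setup can genuinely be met for \emph{every} normal toric variety, including non-simplicial ones — the point being that we are free to refine $\Sigma'$ simplicially, since refining only changes the stack, not the target variety $X_{\Sigma'}$, and the generator statement we need is about $Coh(X_{\Sigma'})$. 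None of these steps is hard, but they are where the argument must be pinned down precisely.
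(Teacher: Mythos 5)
Your overall strategy (lower bound from Rouquier; upper bound by pushing the generator of Corollary~\ref{cor: rdim=dim Cox} forward along a map $p$ with $Rp_*\mathcal O = \mathcal O$, so that the projection formula makes $Rp_*$ have dense image) is the same reduction the paper uses, but there is a genuine gap at the point where you claim that \emph{every} normal toric variety $X_{\Sigma'}$ is the coarse moduli space of a smooth toric DM stack $[U/G]$ in the setup of \cite{Kuw20}. That setup forces the fan $\Sigma$ to be simplicial, and the coarse moduli space of the resulting $[U/G]$ is $X_\Sigma$ for the simplicial fan you actually fed into the construction. If $\Sigma'$ is not simplicial, a simplicial refinement $\Sigma$ of $\Sigma'$ produces a \emph{different} variety: there is a proper birational toric morphism $g: X_\Sigma \to X_{\Sigma'}$, not an isomorphism, so your assertion that ``refining only changes the stack, not the target variety $X_{\Sigma'}$'' is false. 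Consequently the map $[U/G] \to X_{\Sigma'}$ is the composite of a coarse moduli map with $g$; it has positive-dimensional fibers, and the identity $Rp_*\mathcal O_{[U/G]} = \mathcal O_{X_{\Sigma'}}$ is no longer a formal consequence of coarse moduli theory (tameness and trivial generic stabilizer only handle the quasi-finite coarse-moduli part). What it actually requires is the nontrivial vanishing $R^i g_*\mathcal O_{X_\Sigma} = 0$ for $i>0$, i.e.\ that normal toric varieties have rational singularities.

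That is exactly how the paper closes this gap: it chooses a smooth toric resolution $g: X_\Sigma \to X_{\Sigma'}$ (so that $X_\Sigma$ is honestly isomorphic to its Cox quotient stack $[U/G]$ and Corollary~\ref{cor: rdim=dim Cox} applies verbatim), invokes rational singularities of normal toric varieties (\cite[Theorem 11.4.2]{CLS}) to conclude via \cite[Lemma 7.4]{Kaw19} that $g_*$ has dense image, and then uses the same dense-image principle you cite to transfer the upper bound, with the lower bound again coming from \cite[Proposition 7.17]{Rou08}. So your argument is essentially complete when $\Sigma'$ is simplicial (there the canonical stack really does have coarse space $X_{\Sigma'}$ and $p_*$ is exact in characteristic $0$), but for general normal $X_{\Sigma'}$ you must replace the nonexistent coarse-moduli presentation by a resolution and explicitly supply the rational-singularities input; without it the key identity $Rp_*\mathcal O = \mathcal O$ is unjustified.
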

\begin{proof}
First, observe that we can choose a smooth resolution $X_{\Sigma}$ (see e.g. \cite[Theorem 11.1.9]{CLS}.) Since $X_{\Sigma}$ is smooth, it is isomorphic to the quotient stack $X_{\Sigma}=[U / G]$ where $U$ is obtained from the Cox construction.
Therefore by Corollary~\ref{cor: rdim=dim Cox},
\[
\rdim Coh(X_\Sigma) = \dim X_\Sigma.
\]
Furthermore, since any normal toric variety has rational singularities (see e.g. \cite[Theorem 11.4.2]{CLS}), the functor 
\[
g_*: Coh(X_{\Sigma}) \to Coh(X_{\Sigma'}) 
\]
has dense image (see e.g. \cite[Lemma 7.4]{Kaw19}).  It follows that
\[
\rdim Coh(X_{\Sigma}) \leq \rdim Coh(X_{\Sigma'}) = \dim X_\Sigma' =\dim X_\Sigma
\]
Once again, the lower bound for is \cite[Proposition 7.17]{Rou08}.

\end{proof}

\begin{remark}
To deduce the general case from the smooth case, one may also observe that $\Lambda_{\Sigma'} \subseteq \Lambda_\Sigma$ and then appeal directly to Theorem~\ref{thm: main theorem}.
\end{remark}


\begin{thebibliography}{HHHH}
\bibitem[BC21]{BC21}
Bai, S. and Cot\'e. L., 2022. \emph{On the Rouquier dimension of wrapped Fukaya categories and a conjecture of Orlov.} \href{https://arxiv.org/pdf/2110.10663.pdf}{arXiv:2110.10663}.
\bibitem[BDM19]{BDM19} Ballard, M.R., Duncan, A. and McFaddin, P.K., 2019. \emph{The toric Frobenius morphism and a conjecture of Orlov.} European Journal of Mathematics, 5(3), pp.640-645.
\bibitem [BDM] {BDM} Ballard, M., Duncan, A. and McFaddin, P., \emph{Generation and the toric Frobenius.} in preparation.
\bibitem [BF12] {BF12} Ballard, M. and Favero, D., 2012. \emph{Hochschild dimensions of tilting objects.} International Mathematics Research Notices, (11), pp.2607-2645.
\bibitem [BFK12] {BFK12} Ballard, M., Favero, D. and Katzarkov, L., 2012. \emph{Orlov spectra: bounds and gaps.} Inventiones mathematicae, 189(2), pp.359-430.
\bibitem[BFK14]{BFK14} Ballard M., Favero, D. and Katzarkov, L., 2014. \emph{A category of kernels for equivariant factorizations, II: further implications.} Journal de Math\'ematiques Pures et Appliqu\'ees 102, no. 4, 702–757.
 \bibitem[BFK19]{BFK19} Ballard M., Favero D., Katzarkov L., 2019. \emph{Variation of Geometric Invariant Theory quotients and derived categories}. J. Reine Angew. Math. 746, 235-303.
Beijing: 47-56.
\bibitem [Bon06] {Bon06} Bondal, A., 2006. \emph{Derived categories of toric varieties.} Oberwolfach Rep. \textbf{3}, 284-286.
\bibitem[BS22]{BS22}
Brown, M. and Sayrafi, M., 2022. \emph{A short resolution of the diagonal for smooth projective toric varieties of Picard rank 2}.  \href{https://arxiv.org/abs/2208.00562}{arXiv:2208.00562.}
\bibitem [CF67] {CF67} Cooke, G.E. and Finney, R.L., 1967. \emph{Homology of cell complexes. }Princeton University Press.
\bibitem[CLS11]{CLS}
Cox, D., Little, J., and Schenck, H. \emph{Toric varieties.} Vol. 124. American Mathematical Soc., (2011).
\bibitem [EXZ21] {EXZ21} Elagin, A., Xu, J. and Zhang, S., 2021. \emph{On cyclic strong exceptional collections of line bundles on surfaces.} European Journal of Mathematics, 7, pp.69-115.
\bibitem [FLTZ11] {FLTZ11} Fang, B., Liu,  C.C.M., Treumann, D.,  and Zaslow, E., 2011. \emph{A categorification of Morelli’s theorem.} Inventiones mathematicae. \textbf{186}, 79-114.
\bibitem [FLTZ12] {FLTZ12} Fang, B., Liu,  C.C.M., Treumann, D.,  and Zaslow, E., 2012. \emph{T-duality and homological mirror symmetry for toric varieties.} Adv. Math. \textbf{229}, 1873-1911.
\bibitem [FLTZ14] {FLTZ14} Fang, B., Liu,  C.C.M., Treumann, D.,  and Zaslow, E., 2014. \emph{The coherent–constructible correspondence for toric Deligne–Mumford stacks.} International Mathematics Research Notices. \textbf{4}, 914-954.
\bibitem [FH22] {FH22} Favero, D. and Huang, J., 2022. \emph{Homotopy Path Algebras. }\href{https://arxiv.org/abs/2205.03730}{arXiv:2205.03730.}
\bibitem [GS15] {GS15} Geraschenko, A. and Satriano, M., 2015. \emph{Toric stacks I: The theory of stacky fans.} Transactions of the American Mathematical Society, 367(2), pp.1033-1071.
\bibitem [GPS18] {GPS} Ganatra, S., Pardon, J. and Shende, V., 2018. \emph{Microlocal Morse theory of wrapped Fukaya categories.} \href{https://arxiv.org/abs/1809.08807}{arXiv:1809.08807.}
\bibitem [HHL-a] {HHL-a} Hanlon, A., Hicks, J. and Lazarev, O., in preparation.
\bibitem [HHL-b] {HHL-b} Hanlon, A., Hicks, J. and Lazarev, O., \emph{Resolutions of toric subvarieties by lines bundles and applications.}, in preparation.
\bibitem [HZ22] {HZ22} Huang, J. and Zhou, P., 2022. \emph{Variation of GIT and variation of Lagrangian skeletons II: Quasi-symmetric case.} Advances in Mathematics, 408, p.108597.
\bibitem[Kaw19]{Kaw19}
Kawamata, Y., 2022. \emph{Semi-orthogonal decomposition of a derived category of a 3-fold with an ordinary double point.} Recent Developments in Algebraic Geometry: To Miles Reid for his 70th Birthday, 478, p.183.
\bibitem [Kuw20] {Kuw20} Kuwagaki, T., 2020. \emph{The nonequivariant coherent-constructible correspondence for toric stacks.} Duke Mathematical Journal, 169(11), pp.2125-2197.
\bibitem [Nad16] {Nad16} Nadler, D., 2016. \emph{Wrapped microlocal sheaves on pairs of pants.} \href{https://arxiv.org/abs/1604.00114}{arXiv:1604.00114.}
\bibitem [Ola21] {Ola21} Olander, N., 2021. The rouquier dimension of quasi-affine schemes. arXiv preprint arXiv:2108.12005.
\bibitem[Orl09]{Orl09}
Orlov, D., 2009. \emph{Remarks on generators and dimensions of triangulated categories.} Moscow Mathematical Journal, 9(1), pp.143-149.
\bibitem [Pol95] {Pol95} Polo, P., 1995. \emph{On Cohen-Macaulay posets, Koszul algebras and certain modules associated to Schubert varieties.} Bulletin of the London Mathematical Society, 27(5), pp.425-434.
\bibitem [Pir19] {Pir19} Pirozhkov, D., 2019. \emph{Rouquier dimension of some blow-ups.} \href{https://arxiv.org/abs/1908.08283}
{arXiv:1908.08283.}
\bibitem [Rou08]{Rou08}
Rouquier, R., 2008. \emph{Dimensions of triangulated categories.} Journal of K-theory, 1(2), pp.193-256.
\bibitem [She22]{She22} Shende, V., 2022. \emph{Toric mirror symmetry revisited.} Comptes Rendus. Mathématique, 360(G7), pp.751-759.
\bibitem[Yan04]{Yan04} Yanagawa, K., 2004. \emph{Derived category of squarefree modules and local cohomology with monomial ideal support.} Journal of the Mathematical Society of Japan, 56(1), pp.289-308.
\bibitem[Yan05]{Yan05} Yanagawa, K., 2005. \emph{Dualizing complex of the incidence algebra of a finite regular cell complex.} Illinois Journal of Mathematics, 49(4), pp.1221-1243.
\bibitem[Yan16]{Yan16}
Yang, S., 2016. \emph{A note on the Rouquier dimensions of product varieties.} Journal of Algebra and Its Applications, 15(04), p.1650065.
\end{thebibliography}
\end{document}